\numberwithin{equation}{section}
\date{\today}
\keywords{polynomial skew product, Fatou component, non wandering domain theorem}
\author{Zhuchao Ji}
\title{ Non-wandering Fatou components for strongly attracting polynomial skew products}
\address{Sorbonne Universités, Laboratoire de Probabilités, Statistique et Modélisation (LPSM, UMR 8001),   4 place Jussieu, 75252 Paris Cedex 05, France}
\email{zhuchao.ji@upmc.fr}
\subjclass[2000]{37F10, 37F50, 32H50}
\newtheorem{theorem}{Theorem}[section]
\newtheorem{definition}[theorem]{Definition}
\newtheorem{proposition}[theorem]{Proposition}
\newtheorem{corollary}[theorem]{Corollary}
\newtheorem{lemma}[theorem]{Lemma}
\newtheorem{remark}[theorem]{Remark}
\newtheorem*{theorem 4.4}{Theorem 4.3}
\newtheorem*{theorem 6.1}{Theorem 6.1}
\newtheorem*{proposition 5.5}{Proposition 5.5}
\newtheorem*{theorem*}{Theorem}
\begin{document}

	\maketitle
	\begin{abstract}
		We show a partial generalization of Sullivan's non-wandering domain theorem in complex dimension two. More precisely, we show the non-existence of wandering Fatou components for polynomial skew products of $  \mathbb{C}^2$ with an invariant attracting fiber, under the assumption that the multiplier $ \lambda $ is small. We actually show a stronger result, namely that every forward orbit of any vertical Fatou disk intersects a bulging Fatou component.
	\end{abstract}

	\tableofcontents
	\section{Introduction}
	Complex dynamics, also known as Fatou-Julia theory, is naturally subdivided according to these two terms. One is focused on the Julia set. This is the set where chaotic dynamics occurs. The other direction of investigation is concerned with the dynamically stable part - the Fatou set. In this paper we will concentrate on the Fatou theory.
	\par In a general setting, let $ M $ be a complex manifold, and let $ f:M\to M $ be a holomorphic self map. We consider $ f $ as a dynamical system, that is, we study the long-time behavior of the sequence of iterates $ \left\lbrace f^n\right\rbrace_{n\geq 0} $. The \textbf{Fatou set} $ F(f) $ is classically defined as the largest open subset of $ M $ in which the sequence of iterates is normal. Its complement is the \textbf{Julia set} $ J(f) $. A \textbf{Fatou component} is a connected component of $ F(f) $.
	\medskip
	
	In one-dimensional case, we study the dynamics of iterated holomorphic self map on a Riemann surface. The classical case of rational functions on Riemann sphere $ \mathbb{P}^1 $ occupies an important place and produces a fruitful theory. The non-wandering domain theorem due to Sullivan \cite{sullivan1985quasiconformal} asserts that every Fatou component of a rational map is eventually periodic. This result is fundamental in the Fatou theory since it leads to a complete classification of the dynamics in the Fatou set: the orbit of any point in the Fatou set eventually lands in an attracting basin, a parabolic basin, or a rotation domain.
	\medskip
	\par The same question arises in higher dimensions, i.e. to investigate the non-wandering domain theorem for higher dimensional holomorphic endomorphisms on $ \mathbb{P}^k $. A good test class is that of polynomial skew products hence one-dimensional tools can be used.
	\par  A polynomial skew product is a map $ P:\mathbb{C}^2\longrightarrow \mathbb{C}^2 $ of the form
	\begin{equation*}
	P(t,z)=(g(t),f(t,z)),
	\end{equation*}
	where $ g $ is an one variable polynomial and $ f $ is a two variable polynomial. See Jonsson \cite{jonsson1999dynamics} for a systematic study of such polynomial skew products, see also Dujardin \cite{dujardin2016non} for an application of polynomial skew products. 
	
	\medskip
		To investigate the Fatou set of $ P
	$, let $ \pi_1 $ be the projection to the $ t $-coordinate, i.e. \begin{equation*}\pi_1 :\mathbb{C}^2\to \mathbb{C}, \;\;\pi_1(t,z)=t .
	\end{equation*}
	We first notice that $\pi_1(F(P))\subset F(g)$, pass to some iterates of $P$, we may assume that  the points in $F(g)$ will eventually land into an immediate  basin or a Siegel disk (no Herman rings for polynomials), thus we only need to study the following semi-local case:
	\begin{equation*}
	P=(g,f): \Delta\times\mathbb{C}\to \Delta\times\mathbb{C},
	\end{equation*}
	where $g(0)=0$ which means the line $\left\lbrace t=0\right\rbrace $ is invariant and $ \Delta $ is the immediate basin or the Siegel disk of $g$. $P$ is called  an attracting, parabolic or elliptic local polynomial skew product when $g'(0)$ is attracting, parabolic or elliptic respectively.
	
	\medskip
	\par The first positive result is due to Lilov.  Under the assumption that $ 0\leq |g'(0)|<1 $, Koenigs' Theorem and Böttcher's Theorem tell us that the dynamical system is locally conjugated to
	\begin{equation*}
	P(t,z)=(\lambda t,f(t,z)),
	\end{equation*}
	when $ g'(0)=\lambda \neq 0 $, or 
	\begin{equation*}
	P(t,z)=(t^m,f(t,z)),\;m\geq 2,
	\end{equation*}
	when $ g'(0)= 0 $. In the first case the invariant fiber is called attracting and in the second case the invariant fiber is called super-attracting. Now $f$ is no longer a polynomial, and $ f $ can be written as a polynomial in $ z $, 
	\begin{equation*}
	f(t,z)=a_0(t)+a_1(t)z+\cdots+a_d(t)z^d,
	\end{equation*}
	with coefficients $ a_i(t) $ holomorphic in $ t $ in a neighborhood of $ 0 $, we further assume that $ a_d(0)\neq 0 $ (and we make this assumption in the rest of the paper). In this case the dynamics in $\left\lbrace  t=0\right\rbrace  $ is given by the polynomial
	\begin{equation*}
	p(z)=f(0,z)
	\end{equation*}
	and is very well understood. In his unpublished PhD thesis \cite{lilov2004fatou}, Lilov first showed that  every Fatou component of $ p $ in the super-attracting invariant fiber is contained in a two-dimensional Fatou component, which is called a bulging  Fatou component. We will show that this bulging property of Fatou component of $ p $ also holds in attracting case.  
	\par Lilov's main result is the non-existence of wandering Fatou components for local polynomial skew products in the basin of a super-attracting invariant fiber. Since this is a local result, it can be stated as follows.
	\begin{theorem*}[\bf Lilov] \label{Lilov}
		For a local polynomial skew product $ P $ with a super-attracting invariant fiber,
		\begin{equation*}
		P(t,z)=(t^m,f(t,z)),\;m\geq 2,
		\end{equation*}
 every forward orbit of a vertical Fatou disk intersects a bulging Fatou component. This implies that every Fatou component iterate to a bulging Fatou component. In particular there are no wandering Fatou components.
	\end{theorem*}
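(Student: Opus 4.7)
First, I would establish the bulging property of Fatou components of $p$. Writing $f(t,z) = p(z) + t \cdot h(t,z)$ with $h$ holomorphic, the fiber map $f(t,\cdot)$ is an $O(|t|)$-perturbation of $p$ on compact sets. For each type of Fatou component of $p$---super-attracting basin, (geometrically) attracting basin, parabolic basin, Siegel disk---the bulging extension follows from standard perturbation theory: at a periodic point $z_0$ of $p$ of period $q$, the Jacobian $DP^q(0, z_0)$ has eigenvalues $0$ (from the super-attracting base) and $(p^q)'(z_0)$, and a two-dimensional normal-form or holomorphic-motion argument produces a local bulging basin, which is then pulled back along $P^n$ to give the full bulging component.

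Second, I would prove a tracking estimate for the fiber dynamics. Writing $P^n(t,z) = (t^{m^n}, f_n(t,z))$ with $f_n(t,z) = f(t^{m^{n-1}}, f_{n-1}(t,z))$, for orbits confined to a compact $K$ and with Lipschitz constant $L_K$ for $p|_K$, induction yields
\begin{equation*}
|f_n(t,z) - p^n(z)| \leq C_K \sum_{k=0}^{n-1} L_K^{n-1-k} |t|^{m^k}.
\end{equation*}
Because the terms $|t|^{m^k}$ decay doubly exponentially in $k$, the essential consequence is: starting from a fiber $\{t = t_N\}$, the iterates of $P$ track those of $p$ accurately for up to $n \sim c \, m^N$ further steps, with $c>0$ depending only on $|t_0|$ and $L_K$. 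The super-attracting regime $m \geq 2$ is exactly what makes this tracking horizon diverge with $N$; a merely geometric decay of $|t_N|$ would give only a logarithmic horizon.

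Third, the main argument proceeds by subsequence extraction. Let $D_0$ be a vertical Fatou disk in $\{t = t_0\}$ and set $D_n = P^n(D_0) \subset \{t = t_n\}$. Normality of $\{P^n|_{D_0}\}$ produces a subsequence $n_k \to \infty$ for which $P^{n_k}|_{D_0}$ converges uniformly on compacts to a holomorphic map $\phi_\infty$ with image in $\{t = 0\}$. If $\phi_\infty$ is non-constant, then for each $j \geq 0$ the map $P^j \circ \phi_\infty = \lim_k P^{j + n_k}|_{D_0}$ is uniformly bounded, so $\{p^j|_{\phi_\infty(D_0)}\}$ is a normal family: the image is a Fatou disk of $p$, contained in a Fatou component $U$, and by the bulging step $D_{n_k}$ lies in the bulging extension $\hat U$ for $k$ large. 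If $\phi_\infty$ is the constant $(0, z_0)$ with $z_0 \in F(p)$, the same bulging conclusion applies to the Fatou component of $p$ containing $z_0$.

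The main obstacle is the remaining case $\phi_\infty \equiv (0, z_0)$ with $z_0 \in J(p)$, which I would rule out by contradiction. Non-equicontinuity of $\{p^n\}$ at $z_0$ produces, for any prescribed $R > 0$, an integer $M$ and points $w, w'$ arbitrarily close to $z_0$ with $|p^M(w) - p^M(w')| \geq R$. Taking $k$ large enough that $w, w' \in D_{n_k}$ and $L_K^M |t_{n_k}| \leq 1$---possible thanks to the super-exponential decay $|t_{n_k}| = |t_0|^{m^{n_k}}$---the tracking estimate gives
\begin{equation*}
|\pi_2(P^M(t_{n_k}, w)) - \pi_2(P^M(t_{n_k}, w'))| \geq R - 2 C_K L_K^M |t_{n_k}| \geq R/2,
\end{equation*}
forcing $\mathrm{diam}(D_{n_k + M}) \geq R/2$. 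Since $R$ is arbitrary, this contradicts the uniform bound on $\mathrm{diam}(D_n)$ supplied by normality on $D_0$, completing the proof.
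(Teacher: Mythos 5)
Your Steps 1 and 2 (the bulging property and the tracking estimate with its doubly exponential horizon $\sim c\,m^{N}$) are sound and consistent with the paper's strategy (Section 3, and the role of $|t_n|=|t_0|^{m^n}$ indicated in Remark 6.3). The gap is in the final step. Once you have extracted $\phi_\infty\equiv(0,z_0)$ with $z_0\in J(p)$, you write ``taking $k$ large enough that $w,w'\in D_{n_k}$''. But $\phi_\infty$ being constant means precisely that the disks $D_{n_k}$ shrink to the single point $(0,z_0)$, so for a fixed pair $w\neq w'$ this inclusion fails for all large $k$; the super-exponential decay of $|t_{n_k}|$ that you invoke controls the base coordinate, not the size of the vertical disk. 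What your argument needs, and does not contain, is a lower bound on the inradius $\rho_n$ of $\pi_2(D_n)$ -- the statement that forward images of a wandering vertical Fatou disk shrink slowly. This is exactly the content of the paper's Lemmas 5.1, 5.2, 5.8, 5.9 and Proposition 5.5 (Lilov's key lemma in the super-attracting case), proved via the Denker--Przytycki--Urbanski control of how often the orbit approaches the critical set. It is not a technicality: Peters--Vivas show that in the merely attracting case vertical Fatou disks can shrink fast enough to wander, so the entire theorem hinges on this estimate.

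A second, related gap: even granting a bound such as $\rho_n\geq e^{-Cn}$, your contradiction requires a quantitative estimate $M(z,r)\lesssim r^{-A}$ for the time needed for $p$ to expand $D(z,r)$, $z\in J(p)$, to definite diameter, so that $M_k$ stays within the tracking horizon; mere non-equicontinuity provides no such rate (consider parabolic or Cremer points on $J(p)$). The paper avoids this entirely: using the horizontal-size estimate $r(z)\geq k\,d(z,J(p))^l$ (Theorem 4.3) it locates a point of $J(p)$ well inside $\pi_2(\Delta_n)$, and then an inductive shadowing argument (Steps 2--3 of Theorem 6.1) shows that the full forward $p$-orbit of a fixed disk centred at that Julia point remains inside the uniformly bounded sets $\pi_2(\Delta_n)$ for \emph{all} time, so $\{p^m\}$ is normal there, contradicting membership in $J(p)$ without any quantitative expansion input. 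To repair your proof you should either import the slow-shrinking lemma together with this ``trapped for all time'' argument, or supply both of the missing quantitative estimates.
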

See Definition 2.1 for the definition of the vertical Fatou disk.
	\medskip
	\par On the other hand, recently Astorg, Buff, Dujardin, Peters and Raissy \cite{astorg2014two} constructed a holomorphic endomorphism $ h:\mathbb{P}^2\longrightarrow \mathbb{P}^2$, induced by a polynomial skew product $ P=(g(t),f(t,z)):\mathbb{C}^2\longrightarrow \mathbb{C}^2 $ with parabolic invariant fiber, processing a wandering Fatou component, thus the non-wandering domain theorem does not hold for general polynomial skew products.
	\medskip
		\par At this stage it remains an interesting problem to investigate the existence of wandering Fatou components for local polynomial skew products with attracting but not super-attracting invariant fiber. As it is clear from Lilov's theorem, Lilov actually showed a stronger result, namely that every forward orbit of a vertical Fatou disk intersects a bulging Fatou component. Peters and Vivas showed in \cite{peters2016polynomial} that there is an attracting local polynomial skew product with a wandering vertical Fatou disk, which shows that Lilov's proof breaks down in the general attracting case. Note that this result does not answer the existence question of wandering Fatou components, but shows that the question is considerably more complicated than in the super-attracting case. On the other hand, by using a different strategy from Lilov's, Peters and Smit in \cite{peters2017fatou} showed that the non-wandering domain theorem holds in the attracting case, under the assumption that the dynamics on the invariant fiber is sub-hyperbolic. The elliptic case was studied by Peters and Raissy in \cite{peters2017elliptic}. See also Raissy \cite{raissy2017polynomial} for a survey of the history of the investigation of wandering domains for polynomial skew products.
	\medskip
	\par In this paper we prove a non-wandering domain theorem in the attracting local polynomial skew product case  without any assumption of the dynamics on the invariant fiber. Actually we show that Lilov's stronger result holds in the attracting case when the multiplier $ \lambda $ is small.
	\begin{theorem*}[\bf Main Theorem] \label{Main Theorem}
		For a local polynomial skew product $ P $ with an attracting invariant fiber,
		\begin{equation*}
		P(t,z)=(\lambda t,f(t,z)),
		\end{equation*}
		for any fixed $ f $, there is a constant $ \lambda_0 =\lambda_0(f)>0$ such that if $ \lambda $ satisfies $ 0<|\lambda|<\lambda_0 $, every Fatou component iterates to a bulging Fatou component. In particular there are no wandering Fatou components.
	\end{theorem*}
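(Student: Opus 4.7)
The plan is to follow Lilov's overall strategy. By a standard reduction (cf.\ Section 2) it suffices to prove the stronger statement that for every vertical Fatou disk $D_0 \subset \{t = t_0\}$, the forward orbit $D_n := P^n(D_0)$ intersects a bulging Fatou component for some $n$. A preliminary step, absent from the super-attracting case, is to establish the bulging property in the attracting setting: every Fatou component $U$ of $p(z) = f(0,z)$ is contained in a $2$-dimensional Fatou component $\widehat{U}$ of $P$. This is obtained by a Koenigs linearization in the $t$-direction combined with the normality of $\{P^n|_U\}$, or equivalently via a holomorphic motion of $U$ parametrized by small $t$.

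The core technical step is to compare the composed vertical iterate
\begin{equation*}
f^{(n)}_{t_0}(\cdot) := f(\lambda^{n-1} t_0, \cdot) \circ \cdots \circ f(\lambda t_0, \cdot) \circ f(t_0, \cdot)
\end{equation*}
with the pure polynomial iterate $p^n$. Writing $f(t,z) = p(z) + t\,h(t,z)$ and telescoping, one bounds $f^{(n)}_{t_0} - p^n$ on a compact disk by a sum of the form $|t_0|\sum_{k=0}^{n-1} |\lambda|^k |(p^{n-1-k})'|$ evaluated at appropriate points. Along an orbit that is Lyapunov-regular with exponent $\chi$, the factors $|(p^m)'|$ grow like $e^{m\chi}$, so the sum is essentially geometric with ratio $|\lambda|\,e^{-\chi}$. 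Choosing $\lambda_0 = \lambda_0(f)$ so that $|\lambda_0| \cdot e^{\chi_{\max}(p)} < 1$, where $\chi_{\max}(p)$ is the supremum of Lyapunov exponents of $p$ on $J(p)$, this comparison becomes effective in a relative sense, and the vertical dynamics of $P$ becomes a genuine perturbation of $p$.

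With this estimate in hand, the argument proceeds by dichotomy. If the $z$-component of the forward orbit accumulates in $F(p)$, then $D_n$ is eventually absorbed into a bulging component $\widehat{U}$, and we are done. Otherwise the $z$-component accumulates on $J(p)$; passing to a Lyapunov-regular subsequence $\{n_k\}$, the comparison together with Koebe distortion on the vertical slice forces $\mathrm{diam}(D_{n_k}) \to 0$, so $D_{n_k}$ Hausdorff-converges to a single point $z_\infty \in \{t=0\}$. Since each $D_n$ lies in $F(P)$ and $F(P) \cap \{t=0\} \subset F(p)$ by a normality argument, $z_\infty \in F(p)$, so the bulging property again places $D_{n_k}$ in a bulging component for large $k$, contradicting the assumption.

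The main obstacle is making the comparison uniform along orbits that may repeatedly approach $\mathrm{Crit}(p)$: Koebe distortion requires the vertical slice to avoid critical points by a controlled amount, and the Lyapunov-regularity extraction needs enough ergodic theory on $J(p)$ (harmonic measure or a Pesin-type argument). The Peters--Vivas example shows that the smallness condition on $|\lambda|$ cannot be dropped, since the accumulated telescoping error can otherwise drive persistent wandering of vertical disks. It is precisely this quantitative tie between $\lambda_0$ and the Lyapunov data of $p$ that makes the theorem a small-multiplier phenomenon rather than a general one.
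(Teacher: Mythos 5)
There are genuine gaps here, and they sit exactly at the points where the theorem is hard. First, in your second branch you conclude that the Hausdorff limit $z_\infty$ of the shrinking disks $D_{n_k}$ lies in $F(p)$ ``since each $D_n$ lies in $F(P)$.'' This is a non sequitur: the Fatou set is open, not closed, and a limit of points of $F(P)$ can perfectly well lie on the Julia set --- indeed, in the problematic scenario the orbit accumulates on $J(p)$ by hypothesis, so $z_\infty\in J(p)$ and no contradiction arises. This step begs the question; if it were valid, the theorem would follow with no smallness assumption on $\lambda$ at all. The paper's actual contradiction is obtained quite differently: one shows that the \emph{inradius} $\rho_n$ of $\pi_2(\Delta_n)$ decays sub-geometrically (Proposition 5.5, a two-dimensional adaptation of the Denker--Przytycki--Urbanski lemma), that a genuine one-dimensional disk $D(\pi_2(y_{N_1}),\rho_{N_1}/4)$ around a point of $J(p)$ has all its $p$-iterates trapped in the uniformly bounded sets $\pi_2(\Delta_{m+N_1})$ (a shadowing argument), and hence that $\{p^m\}$ is normal on an open set meeting $J(p)$ --- the contradiction is normality of $p$ on a disk, not a limit of Fatou points.

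Second, your first branch (``if the $z$-orbit accumulates in $F(p)$ then $D_n$ is absorbed into a bulging component'') silently assumes a quantitative lower bound on the \emph{horizontal} extent of the bulging components near $J(p)$. Bulging alone does not suffice: the two-dimensional component over $U\subset F(p)$ could pinch in the $t$-direction faster than $|t_n|=|\lambda|^n|t_0|$ decays as $z_n\to J(p)$. The paper's Theorem 4.3, $r(z)\ge k\,d(z,J(p))^l$, proved by a pull-back argument driven by the DPU lemma and using the smallness of $\lambda$, is precisely what closes this gap, and nothing in your proposal supplies it. Finally, your telescoping comparison of $f^{(n)}_{t_0}$ with $p^n$ produces an error of order $|t_0|\sum_k|\lambda|^k|(p^{n-1-k})'|\sim|t_0|e^{(n-1)\chi}$, which is exponentially large near $J(p)$ no matter how small $|\lambda|$ is; and Lyapunov-regularity / Pesin-type arguments apply only to measure-generic orbits, whereas the theorem must handle \emph{every} vertical Fatou disk. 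You flag this yourself as ``the main obstacle,'' but it is not an obstacle to be smoothed over later --- it is the content of the theorem, and the DPU machinery (which controls how often \emph{any} orbit can approach the critical points on $J(p)$) is what replaces the ergodic-theoretic input you are reaching for.
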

\par We can also apply this local result to globally defined polynomial skew products,  see Theorem 6.4 for the precise statement.
	\par The proof of the main theorem basically follows Lilov's strategy. The difficulty is that Lilov's argument highly depends on the super-attracting condition and breaks down in the attracting case by \cite{peters2016polynomial}. The main idea of this paper are to use and adapt an one-dimensional lemma due to Denker-Przytycki-Urbanski(the DPU Lemma for short) to our case. This will give estimates of the horizontal size of bulging Fatou components and of the size of forward images of a wandering vertical Fatou disk (these concepts will be explained later). We note that some results in our paper already appear in Lilov's thesis \cite{lilov2004fatou} (Theorem 3.4, Lemma 4.1, Lemma 5.1 and Lemma 5.2). Since his paper is not easily available, we choose to present the whole proof with all details. On the other hand we believe that the introduction of the DPU Lemma makes the argument conceptually simpler even in the super-attracting case.
	\medskip
	\par The outline of the paper is as follows. In section 2 we start with some definitions, then we present the DPU Lemma and some corollaries. In section 3 we show that every Fatou component of $ p $ in the invariant fiber bulges, i.e. is contained in a two-dimensional bulging Fatou component. This result follows classical ideas from normal hyperbolicity theory.
	\par In section 4 we give an estimate of the horizontal size of the bulging Fatou components by applying the one-dimensional DPU Lemma. Let $ z\in F(p) $ be a point in a Fatou component of the invariant fiber and denote by $ r(z) $ the supremum  radius of a horizontal holomorphic disk (see Definition 2.1 for the precise definition) centered at $z$ that is contained in the bulging Fatou component. We have the following key estimate.
	\begin{theorem 4.4}
		If $\lambda$ is chosen sufficiently small, then there are constants $ k>0 $, $ l>0 , R>0$ such that for any point $ z\in F(p)\cap\left\lbrace |z|<R\right\rbrace  $, 
		\begin{equation*}
		r(z)\geq k\; d(z,J(p))^l,
		\end{equation*}
		where $ J(p) $ is the Julia set in the invariant fiber.
	\end{theorem 4.4}
	\medskip
	\par In section 5 we adapt the DPU Lemma to the attracting local polynomial skew product case, to show that the size of forward images of a wandering vertical Fatou disk shrinks slowly, which is also important in the proof of the main theorem. 
	\begin{proposition 5.5}
		Let $ \Delta_0 \subset \left\lbrace t=t_0\right\rbrace  $ be a wandering vertical Fatou disk centered at $ x_0=(t_0,z_0) $. Let $x_n= (t_n,z_n)=P^n(x_0) $. Define a function $ \rho $ as follows: for a domain $ U \subset \mathbb{C} $, for every $ z\in U \subset \mathbb{C}$, define
		\begin{equation*}
		\rho(z,U)=\sup\left\lbrace r>0 |\;D(z,r)\subset U\right\rbrace .
		\end{equation*}
		Set $ \Delta_n=P^n(\Delta_0) $  for every $ n \geq 1$ and let $  \rho_n=\rho(z_n,\pi_2(\Delta_n)) $, here $ \pi_2 $ is the projection $ \pi_2:(t,z)\mapsto z $. If $\lambda$ is chosen sufficiently small, we have 
		\begin{equation*}
		\lim_{n\to \infty}\frac{|\lambda|^n}{\rho_n}=0.
		\end{equation*}
		
	\end{proposition 5.5}
	\medskip
	\par The proof of the main theorem is given in section 6. The main point are to combine Theorem 4.3 and Proposition 5.5 to show wandering vertical Fatou disk can not exist. We finish section 6 with some remarks around the main theorem. We also show how our main theorem can be applied to globally defined polynomial skew products in theorem 6.4.
	\medskip
	\par \textbf{Acknowledgements.} I would like to thank Romain Dujardin for drawing my attention to the subject and for his invaluable help. The work is partially supported by ANR-LAMBDA, ANR-13-BS01-0002. I also would like to thank the referee for the nice suggestions on the structure of the paper.
	
	\section{Preliminaries}
	\subsection{Horizontal holomorphic disk and vertical Fatou disk}
	In this subsection we make the precise definitions appearing in the statement of Theorem 4.3 and Proposition 5.5.
	\medskip
	\par Recall that after a local coordinate change our map has the form $ P: \Delta\times\mathbb{C}\to \Delta\times\mathbb{C} $, here $ \Delta\subset\mathbb{C}$ is a disk centered at $ 0 $, such that
	\begin{equation*}
	P(t,z)=(\lambda t, f(t,z)),
	\end{equation*}
	here $f$ is a polynomial in $ z $ with coefficients $a_i(t)$ holomorphic in $ \Delta $, and $ a_d(0)\neq 0 $, $ \lambda $ satisfies $ 0<|\lambda|<1 $. 

\bigskip
	\begin{definition}
\begin{itemize}	 
		\item 	A \textbf{horizontal holomorphic disk} is a subset of the form
		\begin{equation*}
		\left\lbrace (t,z)\in \Delta\times\mathbb{C}\;| \;z=\phi(t),\;|t|<\delta\right\rbrace 
		\end{equation*}
		where $ \phi(t) $ is holomorphic in $ \left\lbrace |t|<\delta\right\rbrace $ for some $ \delta>0 $. $ \delta $ is called the \textbf{size} of the horizontal holomorphic disk.
		\medskip
		\item  Let $ \pi_2 $ denote the projection to the $ z $-axis, that is
		\begin{align*}
		\pi_2: \Delta\times\mathbb{C}&\longrightarrow\mathbb{C},\\
		(t,z)&\longmapsto z.
		\end{align*}
		A subset $ \Delta_0 $ lying in some $ \left\lbrace t=t_0\right\rbrace $ is called a \textbf{vertical disk} if $ \pi_2\left( \Delta_0\right) $  is a disk in the complex plane. A vertical disk centered at $ x_0 $ with radius $ r $ is denoted by $ \Delta(x_0,r) $. $ \Delta_0 $ is called a  \textbf{vertical Fatou disk } if the restriction of $ \left\lbrace P^n\right\rbrace_{n\geq0}$ to $\Delta_0 $ is a normal family.
	
		\end{itemize}
	\end{definition}
\medskip
In the rest of the paper, for a disk on the complex plane centered at $ z $ with radius $ r $, we denote it by $ D(z,r) $ to distinguish.  
\medskip
	\begin{remark}
		A vertical disk contained in a Fatou component of $ P $ is a vertical Fatou disk.
	\end{remark}
\medskip
\par We define a positive real valued function $ r(z) $, which measures the horizontal size of the bulging Fatou components.
\begin{definition}
	For $ z\in\mathbb{C} $ satisfying $ |z|<R $ and $ z $ lying in the Fatou set of $ p $, we define $ {r(z)} $ to be the supremum of all positive real numbers $ r $ such that there exist a horizontal holomorphic disk passing through $ z $ with size $ 2r $, contained in $ F(P)\cap \left\lbrace |z|\leq R\right\rbrace $. 
	
\end{definition}

\medskip
	\subsection{Denker-Przytycki-Urbanski's Lemma}
	In this subsection we introduce the work of Denker-Przytycki-Urbanski in \cite{denker1996transfer}, and give some corollaries. Denker, Przytycki and Urbanski
	consider rational maps on $ \mathbb{P}^1 $, and study the local dynamical behavior of some neighborhood of a critical point lying in Julia set. As a consequence they deduce an upper bound of the size of the pre-images of a ball centered at a point in Julia set.
	\medskip
	\par  In the following  let $ f $ be a rational map on $ \mathbb{P}^1 $, denote by $ C(f) $ the set of critical points of $ f $ lying in Julia set.  Assume that $\# C(f)=q $. We begin with a definition,
	
	\begin{definition}
		For a critical point $ c\in C(f) $, define a positive valued function $ {k_c(x)} $ by 
		\begin{equation*}
		k_c(x)=\left\{
		\begin{aligned}
		-&\log  d(x,c), & \; \text{if}\;x\neq c  \\
		&\infty, &\;\text{if}\; x=c. \\
		\end{aligned}
		\right.
		\end{equation*}
		Define a function $ {k(x)} $ by
		\begin{equation*}
		k(x)=\max_{c\in C(f)} k_c(x).
		\end{equation*}
		Here the distance is relative to the spherical metric on $ \mathbb{P}^1 $. 
	\end{definition}
\medskip
	Let $ x_0 $ be arbitrary and consider the forward orbit $ \left\lbrace x_0, x_1,\cdots, x_n,\cdots\right\rbrace $, where $ x_n=f^n(x_0) $. We let the function $ k(x) $ acts on this orbit and the following DPU Lemma gives an asymptotic description of the sum of $ k(x) $ on this orbit. Recall that $ q $ denotes the number of critical points lying in $J$.
	\begin{lemma}[\bf Denker, Przytycki, Urbanski] \label{Denker, Przytycki, Urbanski}
		There exist a constant $ Q>0 $ such that for every $ x\in\mathbb{P}^1$, and $ n\geq 0 $, there exists a subset $ \left\lbrace j_1, \cdots, j_{q'}\right\rbrace \subset \left\lbrace 0, 1, \cdots, n \right\rbrace $, such that
		\begin{equation*}
		\sum_{j=0}^{n}k(x_j)-\sum_{\alpha=1}^q k(x_{j_\alpha})\leq Qn,
		\end{equation*}
		here $ q'\leq q $ is an integer.
	\end{lemma}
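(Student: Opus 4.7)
The plan is to keep careful accounting of how often the orbit visits small neighborhoods of the critical set $C(f)$, singling out for each critical point $c$ the single time at which $x_j$ comes closest to $c$, and then showing that all remaining contributions to $\sum_{j=0}^n k(x_j)$ can be absorbed into a $Qn$ term.

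First I would choose a radius $r_0 \in (0,1)$ such that the balls $B(c, r_0)$, $c\in C(f)$, are pairwise disjoint and on each of them $f$ is, up to bounded distortion, of the form $z \mapsto z^{m_c}$ in local coordinates, where $m_c$ is the local degree at $c$. Call $j \in \{0, 1, \ldots, n\}$ a \emph{$c$-close time} if $x_j \in B(c, r_0)$, and a \emph{far time} otherwise. At far times, $k(x_j) \leq \log(1/r_0)$, so the total contribution of far times to $\sum_{j=0}^n k(x_j)$ is bounded by $(n+1)\log(1/r_0)$, which is already $O(n)$.

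Next, for each $c \in C(f)$ which has at least one $c$-close time, let $j_c^* \in \{0, \ldots, n\}$ be a time at which $k_c(x_{j_c^*})$ is maximal, and take the $\{j_c^*\}$ as the special indices $j_1, \ldots, j_{q'}$ (with $q' \leq q$). It then remains to show that for each critical point $c$, the sum of $k_c(x_j)$ over non-maximal $c$-close times $j$ is $O(n)$. The dynamical input is twofold. Locally near $c$ one has $d(f(x), f(c)) \asymp d(x, c)^{m_c}$, so after a $c$-close visit of depth $\epsilon := d(x_j, c)$ the next iterate sits within distance of order $\epsilon^{m_c}$ of $f(c)$; and away from $C(f)$ the inverse branches of $f$ enjoy bounded, Koebe-type distortion and definite expansion on the Julia set. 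Combining these, I would argue that a non-maximal $c$-close visit of depth $\epsilon$ forces at least of order $\log(1/\epsilon)$ additional iterates before the next $c$-close visit of comparable or greater depth, which lets one charge each non-maximal contribution $k_c(x_j) = \log(1/\epsilon)$ to a disjoint block of iterates of proportional length, hence bound the total by $O(n)$.

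The main obstacle is making this return-time estimate quantitative in the presence of all the other critical points: between two successive $c$-close times the orbit may also pass near some $c' \neq c$, which locally destroys the clean expansion of $f$. Handling this requires organizing the close visits to all critical points simultaneously and applying the pull-back/bounded-distortion argument along the maximal orbit pieces that remain at distance at least $r_0$ from $C(f)$, carefully ensuring that short ``detours'' near other critical points do not spoil the linear bound. Once this is in place, summing the per-critical-point estimate $O(n)$ over the $q$ critical points and adding the far-time bound gives $\sum_{j=0}^n k(x_j) - \sum_{\alpha=1}^{q'} k(x_{j_\alpha}) \leq Qn$, with $Q$ depending only on $f$ through $r_0$, $q$, and the distortion/expansion constants.
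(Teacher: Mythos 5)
The paper itself gives no proof of this lemma: it is quoted from Denker--Przytycki--Urbanski \cite{denker1996transfer} and used as a black box, so your attempt must be judged against the original argument rather than against anything in this text. Your overall bookkeeping is the right shape: handle times with $d(x_j,C(f))\geq r_0$ trivially, reserve one exceptional index per critical point for its deepest visit, and show that the remaining depths sum to $O(n)$.

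The gap is that the single step carrying all the content is asserted rather than proved, and the mechanism you invoke for it is not available. You claim that a non-maximal visit to $c$ at depth $\epsilon$ forces at least of order $\log(1/\epsilon)$ iterates before the next visit of comparable depth, and you justify this by ``definite expansion on the Julia set'' together with Koebe-type distortion of inverse branches. For a general rational map there is no expansion on $J$ --- that is exactly the hyperbolic case, in which the lemma is easy and in which the present paper would not need it; here $J$ contains critical points, may contain parabolic or Cremer points, and the critical orbits may return arbitrarily close to $C(f)$. The difficulty you name in your last paragraph and then defer (``organizing the close visits\ldots carefully'') is precisely the recurrence of the orbit, and of the critical orbits themselves, to the critical set; this is the actual content of the lemma, and the published proof handles it by an elementary but genuinely nontrivial combinatorial argument based only on the Lipschitz constant of $f$ and the comparison $d(x_{j+i},f^i(c))\leq L^i\,d(x_j,c)$, not on distortion or expansion. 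Finally, even granting your return-time claim, the ``disjoint blocks'' charging is not justified: visits of strictly smaller depth may occur inside the block charged to a deep visit, and their own blocks would then overlap it (consider depths decreasing slowly along consecutive times), so the bound $\sum_j \log(1/\epsilon_j)\lesssim n$ does not follow as stated.
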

\medskip
	Lemma 2.5 implies that in a sense the orbit of a point can not come close to $ C(f) $ very frequently. As a consequence Denker, Przytycki, Urbanski deduce an upper bound of the size of the pre-images of a ball centered at a point in $ J(f) $.
	\begin{corollary}[\bf Denker, Przytycki, Urbanski] \label{Denker, Przytycki, Urbanski}
		There exist $ s\geq 1 $ and $ \rho >0 $ such that for every $ x\in J(f) $, for every $ \epsilon>0$, $ n\geq 0 $, and for every connected component $ V $ of $ f^{-n}(B(x,\epsilon)) $, one has $\text{diam}\;V \leq s^n\epsilon^\rho $.
		
	\end{corollary}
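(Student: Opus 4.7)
The plan is to pull back the ball $B(x,\epsilon)$ step by step along the forward orbit of an arbitrary point $y\in V$ and control at each step how the diameter of the pulled-back component changes. Write $y_j=f^j(y)$, so $y_n=x$, and let $V_j$ denote the connected component of $f^{-(n-j)}(B(x,\epsilon))$ containing $y_j$; in particular $V_0=V$ and $V_n=B(x,\epsilon)$. The estimate for $\mathrm{diam}\,V_0$ will come from multiplying the ratios $\mathrm{diam}\,V_{j-1}/\mathrm{diam}\,V_j$ for $j=n,n-1,\dots,1$.

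The first step is to apply Lemma 2.5 to the orbit $\{y_0,\dots,y_n\}$, producing a subset $S=\{j_1,\dots,j_{q'}\}\subset\{0,\dots,n\}$ of cardinality at most $q$ such that $\sum_{j\notin S}k(y_j)\leq Qn$, equivalently
\begin{equation*}
\prod_{j\notin S}d(y_j,C(f))\;\geq\;e^{-Qn}.
\end{equation*}
Up to a bounded number ($\leq q$) of ``bad'' visits, the orbit does not approach the critical set too quickly, which is exactly what is needed to run a bounded-distortion pull-back.

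Next I would separate the pull-back estimates into good and bad steps. For a good step ($j-1\notin S$), the point $y_{j-1}$ is at distance at least $e^{-k(y_{j-1})}$ from $C(f)$, so one can choose a definite disk around $y_{j-1}$ on which the relevant inverse branch of $f$ is univalent, and the Koebe distortion theorem yields
\begin{equation*}
\mathrm{diam}\,V_{j-1}\;\leq\;\frac{C\,\mathrm{diam}\,V_j}{|f'(y_{j-1})|}\;\leq\; C\,e^{(M-1)\,k(y_{j-1})}\,\mathrm{diam}\,V_j,
\end{equation*}
where $M$ is the maximum multiplicity of a critical point of $f$. For a bad step ($j-1\in S$) the local normal form $z\mapsto z^M$ near a critical point instead gives a power-law bound $\mathrm{diam}\,V_{j-1}\leq C(\mathrm{diam}\,V_j)^{1/M}$. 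Multiplying all $n$ ratios and invoking the DPU bound $\sum_{j\notin S}k(y_j)\leq Qn$ turns the accumulated exponential factor into $e^{(M-1)Qn}$, absorbed into $s^n$, while the at most $q$ power-law ratios collectively lower the single factor $\epsilon$ to $\epsilon^{\rho}$ with $\rho$ of order $M^{-q}$, yielding the stated bound.

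The main obstacle is reconciling the good-step Koebe argument with the bad-step critical-point argument consistently along the orbit. One must arrange at each step a pull-back neighborhood of $y_{j-1}$ whose size is simultaneously small enough that the appropriate inverse branch is univalent on it (so Koebe applies) and large enough to contain $V_{j-1}$; in particular an inductive shrinking argument is required to check that after a good step the newly obtained $V_{j-1}$ is small enough to fit inside the Koebe disk used at the next step, and that after a bad step the local critical-point normal form is valid in the actual regime of radii produced by the previous iterations.
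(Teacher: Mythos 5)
The paper does not actually prove this corollary: it is quoted from Denker--Przytycki--Urbanski \cite{denker1996transfer}, and only the subsequent Corollary 2.7 (the transfer to polynomials and the Euclidean metric) is proved in the text. So there is no in-paper argument to compare against; your sketch has to be measured against the original DPU proof, and it does reconstruct that proof's skeleton correctly: telescope the pull-back of $B(x,\epsilon)$ along the orbit of a point $y\in V$, use Lemma 2.5 to bound the total distortion accumulated at the steps where the orbit is merely close to $C(f)$, and pay a root of order $1/M$ at the at most $q$ exceptional steps, which is exactly where $\rho\sim M^{-q}$ and the factor $s^n\sim e^{(M-1)Qn}$ come from.

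That said, what you call ``the main obstacle'' is not a side issue but the substantive content of the proof, and you leave it unresolved. Two concrete points. First, the good-step inequality $\mathrm{diam}\,V_{j-1}\leq C e^{(M-1)k(y_{j-1})}\,\mathrm{diam}\,V_j$ is legitimate only if the inverse branch sending $y_j$ to $y_{j-1}$ is univalent, with room for Koebe distortion, on a disk containing $V_j$; whether such a disk exists depends on $\mathrm{diam}\,V_j$ being small compared with $d(y_{j-1},\mathrm{Crit}(f))$, which is itself part of what is being estimated. The argument is therefore circular unless one runs a downward induction on $j$, with $\epsilon$ fixed small at the outset, verifying at each stage that the already-established bound on $\mathrm{diam}\,V_j$ keeps $V_j$ inside the univalence disk of the next branch; this induction is the real work and is missing. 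Second, $k$ only records proximity to $C(f)=\mathrm{Crit}(f)\cap J(f)$, whereas univalence of an inverse branch is obstructed by any critical point. You need to restrict to $\epsilon$ small enough that every $V_j$ (which meets $J(f)$, since it contains a preimage of $x$) stays in a neighborhood of $J(f)$ disjoint from $\mathrm{Crit}(f)\setminus C(f)$ --- possible because $\mathrm{Crit}(f)$ is finite --- and then absorb the case of large $\epsilon$ into the constants. With these two points supplied, your outline closes into a proof.
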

	
	\begin{corollary}
		Let $ f $ be a polynomial map in $ \mathbb{C} $. For fixed $ R>0 $, there exist $ s\geq 1 $ and $ \rho >0 $ such that for any $ n\geq 0 $ and any $ z\in \mathbb{C} $ satisfying $ f^n(z)\in \left\lbrace |z|<R \right\rbrace $, we have 
		\begin{equation*}
		d(z,J(f))\leq s^nd(f^n(z),J(f))^\rho,
		\end{equation*}
		where the diameter is relative to the Euclidean metric.
	
	\end{corollary}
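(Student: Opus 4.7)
The plan is to deduce this directly from Corollary 2.6 by two moves: converting between the spherical metric (in which Corollary 2.6 is stated) and the Euclidean metric on a bounded portion of $\mathbb{C}$, and using the total invariance of $J(f)$ to locate a Julia point inside the relevant preimage component.

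First I would install a uniform escape estimate. Since $f$ is a polynomial of degree $\ge 2$, there exists $R_0 > 0$ with $|f(w)| > 2|w|$ whenever $|w| > R_0$. Iterating, for any bounded set $B \subset \{|z| \le M\}$ every preimage $f^{-n}(B)$ lies in the fixed disk $\{|z| \le \max(R_0, M)\}$, independently of $n$. Choosing $R^*$ large enough that $\{|z| < R\} \cup J(f)$ and all relevant preimage components lie in $\{|z| \le R^*\}$, we obtain, on this compact set, absolute constants $C_1, C_2 > 0$ with $C_1 d_e \le d_s \le C_2 d_e$. The key point is that this comparison is uniform in $n$.

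Now take $z$ with $f^n(z) \in \{|z| < R\}$ and pick $y \in J(f)$ realizing $\varepsilon := d_e(f^n(z), J(f))$. Then $f^n(z)$ lies in the spherical ball $B(y, 2C_2 \varepsilon)$. Let $V$ be the connected component of $f^{-n}(B(y, 2C_2 \varepsilon))$ containing $z$. Because $f^n|_V \colon V \to B(y, 2C_2 \varepsilon)$ is a proper branched covering and $J(f)$ is totally invariant, there exists $w \in V$ with $f^n(w) = y$, and necessarily $w \in J(f)$. Applying Corollary 2.6 to the spherical ball yields $\mathrm{diam}_s(V) \le s^n (2C_2 \varepsilon)^\rho$; combining with $V \subset \{|z| \le R^*\}$ gives
\[
d_e(z, J(f)) \;\le\; d_e(z, w) \;\le\; C_1^{-1} d_s(z, w) \;\le\; C_1^{-1} (2C_2)^\rho \, s^n \, \varepsilon^\rho.
\]

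Finally I would absorb the prefactor $C_1^{-1}(2C_2)^\rho$ into a slightly enlarged base $\tilde s \ge s$. This is legitimate since $\varepsilon$ is uniformly bounded by $\mathrm{diam}(\{|z| \le R^*\})$, so the multiplicative constant can be swallowed into $\tilde s^n$ for all $n \ge 0$, yielding the clean form in the statement. The whole argument is essentially bookkeeping on how much is lost in going between the two metrics; the substantive input is Corollary 2.6, and the only point requiring genuine care, which I expect to be the main (though mild) obstacle, is making sure the metric comparison constants do not depend on $n$. This is exactly what the polynomial escape estimate above guarantees.
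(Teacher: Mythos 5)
Your argument is correct and follows essentially the same route as the paper: pass between the spherical and Euclidean metrics on a fixed compact set, apply Corollary 2.6 to a ball of radius comparable to $2\epsilon$ about the Julia point $y$ nearest to $f^n(z)$, and bound $d(z,J(f))$ by the diameter of the preimage component containing $z$, which also contains a preimage of $y$ lying in $J(f)$. The only difference is that you spell out two points the paper leaves implicit — the polynomial escape estimate ensuring the metric comparison is uniform in $n$, and the properness/total-invariance argument producing $w\in V\cap J(f)$ — which is a welcome refinement rather than a deviation.
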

	\begin{proof}
Since the Euclidean metric and the spherical metric are equivalent on a compact subset of $ \mathbb{C} $, by Corollary 2.6 for fixed $ R>0 $, there exist $ s\geq 1 $ and $ \rho >0 $ such that for every $z$ satisfying  $z\in J(f)$, $ 0<\epsilon\leq R $, $ n\geq 0 $, and for every connected component $ V $ of $ f^{-n}(D(z,\epsilon)) $, one has $\text{diam}\;V \leq s^n\epsilon^\rho $.
	
		For any $ z $ and $ n $ satisfy $ f^n(z)\in \left\lbrace |z|<R \right\rbrace $, let $ y\in J $ satisfy $ d(f^n(z),J(f))=d(y,f^n(z))=\epsilon $.  For every connected component $ V $ of $ f^{-n}(D(y,2\epsilon)) $, one has $\text{diam}\;V \leq s_1^n(2\epsilon)^\rho $, so that 
		\begin{align*}
		d(z,J(f))\leq d(z,f^{-n}(y))\leq \text{diam}\;V \leq s_1^n2^\rho\epsilon^\rho .
		\end{align*}
		Set $ s= 2^\rho s_1 $ and the proof is complete.
	\end{proof}
\begin{remark}
 The existence of such a result is intuitive since the Julia set is expected to be repelling in some sense - however the presence of critical points on $J$ makes it non-trivial.
\end{remark}

	\section{Structure of bulging Fatou components}
	In this section we show that every Fatou component of $ p $ in the invariant fiber is actually contained in a Fatou component of $ P $, which is called a \textbf{bulging Fatou component}, and in this case we call the Fatou component of $ p $  \textbf{bulges}. By Sullivan's theorem every Fatou component of $ p $ is pre-periodic, it is sufficient to show that every periodic Fatou component  of $ p $ is contained in a Fatou component of $ P $. There are three kinds of periodic Fatou components of $ p $, i.e. attracting basin, parabolic basin and Siegel disk. For all these three kinds we study the structure of the associated bulging Fatou components.
	
	\par We may iterate $ P $ many times to ensure that all periodic Fatou components of $ p $ are actually fixed, and all parabolic fixed points have multiplier equals to 1. In the following of this paper the metric referred to is the Euclidean metric.
	\medskip
	\subsection{Attracting basin case} In the attracting basin case, assume that we have an attracting basin $ B $ of $ p $ in the invariant fiber. Without loss generality we may assume $ 0 $ is the fixed point in $B$, so that $ (0,0) $ becomes a fixed point of $ P $, and $ p'(0)=\lambda' $ with $ |\lambda'|<1 $.  We have the following well-known theorem \cite{rosay1988holomorphic}.
	\begin{theorem}
		If $ P: \Omega\to \Omega $ is a holomorphic self map, where $ \Omega $ is an open set of $ \mathbb{C}^2 $ and $ (0,0)\in\Omega $ is a fixed point. If all eigenvalues of the derivative $ DP(0,0) $ are less than $ 1 $ in absolute value then $ P $ has an open attracting basin at the origin. 
	\end{theorem}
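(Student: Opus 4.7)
The plan is to reduce to a contraction argument by choosing a norm on $\mathbb{C}^2$ adapted to the linear part $A:=DP(0,0)$. Let $\mu<1$ denote the spectral radius of $A$ (the maximum of $|e_1|,|e_2|$ for the two eigenvalues). Pick any $\sigma$ with $\mu<\sigma<1$. By putting $A$ in (possibly non-diagonal) Jordan form and rescaling the off-diagonal entries, one obtains a hermitian norm $\|\cdot\|$ on $\mathbb{C}^2$ with $\|Av\|\leq \sigma\|v\|$ for all $v$. This is the only ``conceptual'' step and is the standard way to handle the case where $A$ might have a Jordan block with two equal eigenvalues; everything else is routine.

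Next I would exploit smallness of the nonlinear remainder. Write $P(z)=Az+R(z)$ with $R(z)=O(\|z\|^2)$ near the origin. Choose $\varepsilon:=\tfrac{1-\sigma}{2}$ and take $\delta>0$ small enough that the closed ball $\overline{B(0,\delta)}$ lies in $\Omega$ and $\|R(z)\|\leq \varepsilon\|z\|$ for $\|z\|\leq\delta$. Setting $\tau:=\tfrac{1+\sigma}{2}<1$, the triangle inequality gives
\begin{equation*}
\|P(z)\|\leq \|Az\|+\|R(z)\|\leq (\sigma+\varepsilon)\|z\|=\tau\|z\|,\qquad \|z\|\leq\delta.
\end{equation*}

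From this I would deduce the two statements that make up the conclusion. First, $P$ maps the open ball $U:=\{\|z\|<\delta\}$ strictly into itself, and by induction $\|P^n(z)\|\leq\tau^n\|z\|$ for every $z\in U$, so every orbit in $U$ converges geometrically to $(0,0)$. In particular $U$ is contained in the set
\begin{equation*}
\mathcal{B}:=\{z\in\Omega\ :\ P^n(z)\in\Omega\text{ for all }n\geq 0\text{ and }P^n(z)\to (0,0)\}.
\end{equation*}
Second, $\mathcal{B}$ is open: if $z_0\in\mathcal{B}$ then some iterate $P^{n_0}(z_0)$ lies in $U$, and by continuity of $P^{n_0}$ the same holds on a neighborhood of $z_0$, which therefore also lies in $\mathcal{B}$. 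The connected component of $\mathcal{B}$ containing $(0,0)$ is then the advertised open attracting basin.

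I do not expect a real obstacle: the only subtlety is the choice of the adapted norm when $A$ is not diagonalizable, and this is a standard linear-algebra fact. Everything else is a direct contraction-mapping computation together with a soft continuity argument to upgrade the invariant ball $U$ to the full (open) basin.
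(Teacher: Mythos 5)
Your proof is correct: the adapted-norm trick (rescaling the off-diagonal Jordan entry so that $\|Av\|\leq\sigma\|v\|$ for some $\sigma<1$), followed by absorbing the $O(\|z\|^2)$ remainder on a small ball and the soft openness argument for the basin, is exactly the standard argument. The paper does not write out a proof at all --- it simply cites Rosay--Rudin --- and the argument in that reference is the same contraction argument you give, so there is nothing to compare beyond noting that your write-up supplies the details the paper omits.
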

	In our case we have $$ DP(0,0)=\begin{pmatrix}
	\lambda	&  0\\ 
	\frac{\partial f}{\partial t}(0,0)	& \lambda'
	\end{pmatrix}, $$
	so that all the all eigenvalues of the derivative $ DP(0,0) $ are less than $ 1 $ in absolute value. As a consequence $ B$ is contained in a two dimensional attracting basin of $ (0,0) $, say $ U$, so that $ B $ bulges.
	
	\medskip
	\subsection{Parabolic basin case} In the parabolic basin case suppose $0$ is a parabolic fixed point of $ p $. Assume that $ p $ is locally conjugated to $ z\mapsto z+az^s+O(z^{s+1}) $ for some $ s\geq 2 $, $ a\neq 0 $. We first prove that near the fixed point $(0,0)$, $P $ is locally conjugated to 
	\begin{equation*}
	(t,z)\mapsto(\lambda t,z+az^s+O(z^{s+1})).
	\end{equation*}
	where $ O(z^{s+1}) $ means there are constant $ C $ such that the error term $ \leq C|z|^{s+1} $, for all $ (t,z) $ in a neighborhood of the origin. Then we prove in this coordinate every parabolic basin of $ p $ bulges.
	\begin{lemma}
		Assume $ (0,0) $ is a fixed point of $ P $, and $ |p'(0)|=1 $, then there exist a stable manifold through the origin in the horizontal direction. More precisely, there is a holomorphic function $ z=\phi(t) $ defined on a small disk $ \left\lbrace |t|<\delta\right\rbrace $ such that
		\begin{equation*}
		\phi(0)=0,\;\text{and}\;\;f(t,\phi(t))=\phi(\lambda t).
		\end{equation*}
	\end{lemma}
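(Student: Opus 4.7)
The plan is to find $\phi$ first as a formal power series and then establish convergence by the method of majorants. Expanding $f(t,z) = \mu z + \sum_{(i,j) \neq (0,1)} a_{ij} t^i z^j$ where $\mu = p'(0)$, $|\mu| = 1$, and $a_{00} = 0$ (since $\{t=0\}$ is invariant), and writing $\phi(t) = \sum_{n \geq 1} c_n t^n$, I would substitute into $f(t, \phi(t)) = \phi(\lambda t)$ and match coefficients of $t^n$ to obtain a recursion of the form $c_n(\lambda^n - \mu) = R_n(c_1, \dots, c_{n-1})$, where $R_n$ is a polynomial with coefficients polynomial in the $a_{ij}$ and involving only $c_1, \dots, c_{n-1}$. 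A brief index check confirms that $R_n$ really does not involve $c_n$: the only candidate term $a_{01} \phi(t)$ has been isolated on the left, terms with $j = 0$ contribute constants, and for either $j \geq 2$ or $i \geq 1$ the coefficient of $t^n$ in $a_{ij} t^i \phi(t)^j$ involves only $c_k$ with $k < n$. Since $|\lambda| < 1 = |\mu|$ the divisors satisfy the uniform lower bound
\begin{equation*}
|\lambda^n - \mu| \geq 1 - |\lambda|^n \geq 1 - |\lambda| > 0,
\end{equation*}
so the recursion uniquely determines a formal solution $(c_n)_{n \geq 1}$.

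Next, to promote this formal solution to a holomorphic one, I would run a majorant comparison. Set $A_{ij} = |a_{ij}|$ for $(i,j) \neq (0,1)$ and $A_{01} = 0$, and let $F(t,z) = \sum A_{ij} t^i z^j$. Then $F$ is holomorphic on the polydisk of convergence of $f$, with $F(0,0) = 0$ and $\partial_z F(0,0) = 0$. The holomorphic implicit function theorem applied to
\begin{equation*}
(1 - |\lambda|)\, \Phi(t) = F(t, \Phi(t))
\end{equation*}
provides a unique holomorphic solution $\Phi(t) = \sum_{n \geq 1} C_n t^n$ on some disk $\{|t|<\delta\}$, with $C_n \geq 0$. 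Comparing the two recursions and using $|\lambda^n - \mu| \geq 1 - |\lambda|$, a straightforward induction on $n$ gives $|c_n| \leq C_n$ for every $n$. Hence $\phi$ is holomorphic on the same disk on which $\Phi$ converges, satisfies $\phi(0)=0$ by construction, and solves the functional equation $f(t,\phi(t)) = \phi(\lambda t)$.

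The only genuinely delicate step is the convergence argument, which is morally a small-divisor question. Here we are rescued by the strict attractivity $|\lambda| < 1$: the divisors $\lambda^n - \mu$ stay bounded away from zero, in sharp contrast with the Siegel situation $|\lambda| = 1$ where one would need a Diophantine condition on $\lambda/\mu$ and a Brjuno-style analysis. Given this uniform lower bound, the standard majorant comparison with an implicitly-defined dominant function closes the argument, and no finer estimate is required.
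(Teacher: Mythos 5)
Your argument is correct and complete. Note, though, that the paper does not actually prove this lemma: it simply cites Ueda's Theorem 3.1 (the two-dimensional Poincar\'e theorem for semi-attractive fixed points, eigenvalues $\lambda$ with $|\lambda|<1$ and $\mu$ with $|\mu|=1$). What you have written is the classical self-contained proof of that result in the skew-product setting: because the first coordinate is exactly linear, the homological equation produces the divisors $\lambda^n-\mu$, and the hypothesis $|\lambda|<1=|\mu|$ makes the non-resonance automatic and uniform, so the majorant comparison with the implicitly defined $\Phi$ closes the convergence. Your index check that $R_n$ depends only on $c_1,\dots,c_{n-1}$, the positivity of the $C_n$, and the inductive bound $|c_n|\le C_n$ are all sound. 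One trivial slip: $a_{00}=0$ because $(0,0)$ is a fixed point of $P$ (equivalently $f(0,0)=0$), not because $\{t=0\}$ is invariant --- invariance of that fiber concerns the first coordinate only. So your route is a legitimate alternative to (and in effect an unpacking of) the citation: it buys self-containedness at the cost of a page of computation, whereas the citation buys brevity and also delivers the finer structural information about semi-attractive fixed points that the paper uses elsewhere (e.g.\ in Theorem 3.3).
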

	\begin{proof}
	This is related to the two dimensional Poincaré's theorem. See \cite{ueda1986local} Theorem 3.1 for the proof.
	\end{proof}
\medskip
We have the following theorem which is a special case of \cite[\S 7.2]{ueda1986local}.
	\begin{theorem}
		We assume that the local skew product is given by
		\begin{equation*}
		P(t,z)=(\lambda t,z+az^s+O(z^{s+1})),
		\end{equation*}
		then there exist a constant $ \delta>0 $ and s-1 pairwise disjoint simply connected open sets $ U_j\subset \left\lbrace |t|<\delta\right\rbrace \times \mathbb{C} $, referred to as two dimensional attracting petals, with the following properties:\\
		\par (1) $ P(U_j)\subset U_j, $ points in $ U_j $ converge to $ (0,0) $ locally uniformly.\\
		\par (2) For any point $ x_0=(t_0,z_0) $ such that $ P^n(x_0)\rightarrow (0,0) $, there exist integer $ N $ and $ j $ such that for all $ n\geq N $ either $ P^n(x_0)\in U_j $ or $ z_n=0 $.\\
		\par (3) $  U_j=\left\lbrace |t|<\delta\right\rbrace \times\left( U_j\cap \left\lbrace t=0\right\rbrace \right)  $.
	\end{theorem}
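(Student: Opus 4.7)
My plan is to construct the $U_j$ as two-dimensional thickenings of the classical Leau-Fatou petals of the fiber map $p(z)=z+az^s+O(z^{s+1})$, using the uniform contraction $|\lambda|<1$ in the $t$-direction to treat the $z$-dynamics as a small perturbation of $p$. One-dimensional Leau-Fatou theory yields $s-1$ attracting directions at $0$ and associated one-dimensional attracting petals $V_1,\dots,V_{s-1}$; in each direction the Fatou coordinate $w=-1/(a(s-1)z^{s-1})$ conjugates $p$ to $w\mapsto w+1+O(|w|^{-1/(s-1)})$ on a right half-plane. Writing $f(t,z)=p(z)+t\,g(t,z)$ with $g$ holomorphic and bounded near the origin, the skew product conjugates in $(t,w)$-coordinates to
\begin{equation*}
(t,w)\mapsto\bigl(\lambda t,\;w+1+O(|w|^{-1/(s-1)})+t\cdot O(|w|^{1+1/(s-1)})\bigr).
\end{equation*}
I then shrink each $V_j$ slightly to a subpetal $V_j'$ on which $p$ is uniformly contracting toward $0$, and define $U_j:=\{|t|<\delta\}\times V_j'$ with $\delta$ chosen small enough that $f(t,\cdot)$ maps $V_j'$ into itself uniformly for $|t|<\delta$.

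Property (3) is immediate from the product definition. For property (1), the first coordinate contracts by $|\lambda|<1$ and the fiber invariance of $V_j'$ under $f(t,\cdot)$ yields $P(U_j)\subset U_j$; uniform convergence of orbits in $U_j$ to $(0,0)$ follows because $p|_{V_j'}$ drives points into the origin while $|t_n|=|\lambda|^n|t_0|\to 0$ exponentially.

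The main obstacle is property (2): showing that every orbit with $P^n(x_0)\to(0,0)$ is eventually captured by some $U_j$, apart from exceptional steps where $z_n=0$. The geometric decay of $|t_n|$ reduces the analysis asymptotically to the fiber dynamics on $\{t=0\}$, where the classical Leau-Fatou flower theorem forces any nontrivial orbit converging to $0$ to enter some $V_j$. For our perturbed orbit, the $t$-dependent perturbation of the $w$-equation has size $|\lambda|^n|w_n|^{1+1/(s-1)}$, which is dominated by the unit advection on any fixed bounded region of $w$-space. The delicate point is handling orbits that transit close to the repelling separatrices between adjacent petals: there one uses that the one-dimensional dynamics pushes points away from these separatrices with a definite speed, while the $t$-perturbation decays geometrically, so transit times are uniformly bounded and each convergent orbit is ultimately trapped in one of the $U_j$.
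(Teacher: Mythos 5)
The paper does not actually prove this statement: it quotes it as a special case of Ueda's results (\S 7.2 of the cited reference), after first straightening the stable manifold of Lemma 3.2 to reach the stated normal form. Your strategy — product petals $U_j=\{|t|<\delta\}\times V_j'$ over slightly shrunk one-dimensional Leau--Fatou petals — is the right one, but as written it has two genuine gaps.

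First, your perturbation estimate is too weak to give the invariance you claim. Writing $f(t,z)=p(z)+t\,g(t,z)$ with $g$ merely bounded yields, as you compute, an error of size $|t|\cdot O\bigl(|w|^{1+1/(s-1)}\bigr)$ in the Fatou coordinate. This is unbounded on the petal, where $|w|\to\infty$ as $z\to 0$, so no fixed $\delta>0$ makes $f(t,\cdot)$ map $V_j'$ into itself: a point of $V_j'$ with $|z|\ll\delta$ can be displaced by an amount comparable to $\delta$, i.e.\ far outside the cusp of the petal. The assertion ``$\delta$ chosen small enough that $f(t,\cdot)$ maps $V_j'$ into itself'' therefore does not follow from your estimate. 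The fix is to use the hypothesis as the paper states it: the $O(z^{s+1})$ error is uniform in $t$ (this is exactly what the preliminary straightening of the stable manifold buys), so $f(t,z)-p(z)=O(|z|^{s+1})$ and the perturbation in the $w$-coordinate is $O\bigl(|w|^{-1/(s-1)}\bigr)$ uniformly in $t$ — the same order as the error already present for $p$ itself. Then one half-plane/sector is simultaneously invariant for the whole family $\{f(t,\cdot)\}_{|t|<\delta}$, and property (1) together with locally uniform convergence (via $\mathrm{Re}\,w_{n+1}\geq \mathrm{Re}\,w_n+\tfrac12$) goes through.

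Second, property (2) is asserted rather than proved. The claim that ``transit times are uniformly bounded'' near the repelling directions is false even for the unperturbed map $p$: orbits can linger arbitrarily long near a repelling separatrix before entering a petal. Moreover the orbit $z_{n+1}=f(t_n,z_n)$ is non-autonomous, so the classical flower theorem cannot simply be invoked for it. What is needed is a non-autonomous Leau--Fatou argument: from the uniform normal form one gets $|z_{n+1}|\geq|z_n|(1-C|z_n|^{s-1})$, hence $|z_n|\gtrsim n^{-1/(s-1)}$ unless $z_n=0$, and one must then show that $a z_n^{s-1}$ eventually enters and remains in an attracting sector. This is precisely the content of Ueda's \S 7.2 and is the part your sketch leaves out.
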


\medskip
	\par Thus by Theorem 3.3, for fixed $ j $, all the points $ x_0$ whose orbit finally lands on $ U_j $ form an open subset $ \Omega_j $,  which is contained in the Fatou set of $ P $. It is obvious that every parabolic basin of $ p $ is contained in one of such $ \Omega_j $, this implies all parabolic basins of $ p $ bulge.
	\medskip
	\subsection{Siegel disk case} In the Siegel disk case, we assume that $ 0 $ is a Siegel point with a Siegel disk $D\subset \left\lbrace t=0\right\rbrace$. We are going to prove that $ D $ is contained in a two dimensional Fatou component.
	\begin{theorem}
		Assume that p is locally conjugated to $ z\mapsto e^{i\theta}z $ with $ \theta $ an irrational multiple of $ \pi_2 $, then there is a neighborhood $ \Omega $ of $ D $ such that $ D\subset\Omega\subset\mathbb{C}^2 $, and there exists a biholomorphic map $ \psi $ defined on $ \Omega $ such that
		\begin{equation*}
		\psi\circ P\circ\psi^{-1}(t,z)=(\lambda t,e^{i\theta}z).
		\end{equation*}
		
	\end{theorem}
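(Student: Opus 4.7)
The plan is to construct the conjugating biholomorphism explicitly as a limit, using the Siegel linearization of $p$ on the invariant fiber as a seed. Let $\phi_0 : D \to \{|w| < r\}$ be the Siegel linearization, a biholomorphism satisfying $\phi_0(p(z)) = e^{i\theta}\phi_0(z)$. I look for a conjugacy of the form $\psi(t,z) = (t, \phi(t,z))$ with $\phi(0,z) = \phi_0(z)$; unwinding the conjugacy condition $\psi \circ P \circ \psi^{-1}(t,z) = (\lambda t, e^{i\theta}z)$ gives the functional equation
\begin{equation*}
\phi(\lambda t, f(t,z)) = e^{i\theta}\phi(t,z).
\end{equation*}
For each $n \geq 0$, I set $\phi_n(t,z) = e^{-in\theta}\phi_0(\pi_2(P^n(t,z)))$, which is the natural approximate solution: on $\{t=0\}$ one has $\phi_n(0,z) = \phi_0(z)$ for every $n$, and the eventual limit formally satisfies the functional equation by construction.

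For this to make sense I first need to check that the orbit $(t_n, z_n) := P^n(t,z)$ stays in a region where $\phi_0$ is defined, i.e.\ that $z_n$ remains in a compact subset of $D$. Fix $r' < r$, let $K_{r'} := \phi_0^{-1}(\{|w| \leq r'\})$ (a compact $p$-invariant subset of $D$), and write $f(t,z) = p(z) + t\,h(t,z)$ with $h$ holomorphic. In the Siegel coordinate $w_n := \phi_0(z_n)$ the recursion reads $w_{n+1} = e^{i\theta}w_n + t_n\,g(t_n, w_n)$ for some holomorphic $g$, and since $|t_n| = |\lambda|^n|t|$ is summable, for $z \in K_{r'}$ and $|t| \leq \delta$ a straightforward induction gives
\begin{equation*}
|w_n| \leq r' + \frac{M\delta}{1-|\lambda|},
\end{equation*}
which is strictly less than $r$ once $\delta$ is small enough. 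Exhausting $D$ by the compact sets $K_{r'}$ as $r' \to r$ and taking the union of the corresponding tubes $\{(t,z) : z \in K_{r'},\ |t| < \delta(r')\}$ produces the desired open neighborhood $\Omega \supset D$.

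With the orbits controlled, convergence of $(\phi_n)$ reduces to a telescoping estimate:
\begin{equation*}
\phi_{n+1}(t,z) - \phi_n(t,z) = e^{-i(n+1)\theta}\bigl[\phi_0(f(t_n, z_n)) - \phi_0(p(z_n))\bigr],
\end{equation*}
and using $f(t,z) - p(z) = t\,h(t,z)$ together with uniform Lipschitz control of $\phi_0$ on compact subsets of $D$, the right-hand side is $O(|\lambda|^n|t|)$. Uniform convergence on compacts then produces a holomorphic limit $\phi$ on $\Omega$ that satisfies the functional equation and restricts to $\phi_0$ on $\{t=0\}$. The map $\psi(t,z) = (t,\phi(t,z))$ is a biholomorphism onto its image since its Jacobian along $\{t=0\}$ equals $\phi_0'(z) \neq 0$; after possibly shrinking $\Omega$, it realizes the desired conjugacy. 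The main obstacle is the orbit-control step in the second paragraph: because the Siegel disk is recurrent rather than attracting for $p$, I genuinely need the exponential decay of $t_n$ to make the non-autonomous perturbation summable and to prevent $w_n$ from drifting out of the linearization domain.
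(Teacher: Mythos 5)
Your proof is correct and follows essentially the same strategy as the paper: both construct the conjugacy as the limit of $e^{-in\theta}$ times the fiberwise orbit map, after controlling the orbit through the summability of $|t_n| = |\lambda|^n |t|$ so that it stays in a compact subset of the Siegel disk. The only substantive differences are that you prove convergence of the full sequence by a telescoping estimate (which makes the functional equation for the limit immediate), whereas the paper extracts a convergent subsequence by normality, and that you work directly in the Siegel coordinate without first straightening the stable manifold of Lemma 3.2 --- neither change affects correctness.
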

	\begin{proof}
		We may assume that $ p $ is conjugated to $ z\mapsto e^{i\theta}z $, then by Lemma 3.2 there is a stable manifold $ z=\phi(t) $. A change of variables $ z\mapsto z+\phi(t) $ straightens the stable manifold so that $ P $ is conjugated to
		\begin{equation*}
		(t,z)\to (\lambda t,e^{i\theta}z+tg(t,z)),
		\end{equation*} 	where $ g(t,z) $ is a holomorphic function. By an abuse of notation we rename this map by $ P $. 
	
		\par Let $U $ be a relatively compact neighborhood of $ \overline{D} $ in $ \mathbb{C}^2 $. Set $ C=\sup|g(t,z)| $ on $ U $. Let $ \delta$ be so small that $ \frac{C\delta}{1-\delta}<\text{dist}(D,\partial U)  $, and then $\Omega=\left\lbrace |t|<\delta\right\rbrace \times D $ is an open subset of $ U $. Let $ (t_0,z_0) $ be an arbitrary initial point in $ \Omega $, and denote $ P^n(t_0,z_0) $ by $ (t_n,z_n) $, then
		\begin{equation*}
		\left|  |z_{n+1}|-|z_{n}|\right|  \leq |t_ng(t_n,z_n)|\leq C|\lambda|^n\delta.
		\end{equation*}
Then we have
		\begin{equation*}
		\left|  |z_{n}|-|z_0|\right|  \leq \frac{C\delta}{1-\delta} +|z_0| \leq \text{dist}(z_0,\partial U)+|z_0|,
		\end{equation*}
		so that $ (t_n,z_n) $ still lies in $ U $. Thus $\left\lbrace  P^n\right\rbrace  $ is a normal family on $ \Omega $, for the reason that $ P^n(\Omega) $ is uniformly bounded.
		\par Thus we can select a sub-sequence $ \left\lbrace n_j\right\rbrace $ for which the sequence
		\begin{equation*}
		\phi_{t_0}(z_0)=\lim_{j\rightarrow \infty} e^{-in_j\theta}f_{t_{n_j}}\circ f_{t_{n_j}-1}\circ\cdots\circ f_{t_0}(z_0) 
		\end{equation*}uniformly converges on compact subset of $ \Omega $. Thus $ \phi_t(z) $ is a holomorphic function on $ \Omega $, and we have
		\begin{align*}
		\phi_{\lambda t_0}\circ f_{t_0}(z_0)=e^{i\theta}\phi_{t_0}(z_0)
		\end{align*}
		for every $ (t_0,z_0)\in\Omega $. Thus if we let $ \psi(t,z)=(t,\phi_t(z)) $, since $ \phi_0(z)=z $ we can shrink $ \Omega $ if necessary to make sure that $ \psi $ is invertible on $ \Omega $, and we have
		\begin{equation*}
		\psi\circ P\circ\psi^{-1}(t,z)=(\lambda t,e^{i\theta}z).
		\end{equation*}
		For every $ (t,z)\in\Omega $.
	\end{proof} 
\medskip
	\par  It is obvious that $ \Omega $ is contained in the Fatou set of $ P $. Since $ D\subset\Omega $, this implies that every Siegel disk of $ p $ bulges. 
	\medskip
	\subsection{Wandering vertical Fatou disks}
	
	We finish section 3 with a definition.
	\begin{definition}
	A vertical Fatou disk $ \Delta  $ is called \textbf{wandering} if the forward images of $ \Delta $ do not intersect any bulging Fatou component.
	\end{definition}
\medskip
We note that  "wandering" has special meaning in our definition. The definition of wandering vertical Fatou disk we made here is not equivalent to vertical Fatou disks containing  wandering points.
\begin{remark}
The forward orbit of a wandering vertical Fatou disk clusters only on $ J(p) $.
\end{remark}
\begin{proof}
	This is simply because for every $ x=(t,z)\in \Delta$, if $ P^n(x) $ tends to $ (0,z_0)\in F(p)$ then eventually $ P^n((t,z)) $ lands in the bulging Fatou component that contains $ (0,z_0) $. This contradicts the fact  $ x $ lying in a wandering Fatou disk.
\end{proof}
	\section{Estimate of horizontal size of bulging Fatou components}
	In this section we deduce an estimate of the horizontal size of the bulging Fatou components, by applying the one-dimensional DPU Lemma. 
	\par  In the following we choose $ R>0 $ such that if $ (t,z) $ satisfies $ t \in\Delta$, $ |z|>R$, then $ |f(t,z)|\geq 2|z| $. This follows that the line at infinity is super-attracting. Thus for any holomorphic function $ \phi(t) $ defined on $ \left\lbrace |t|<r\right\rbrace $ such that $|\phi(t)|\leq R$, we have for all $ |t|<r $, 
	\begin{equation}
	|\phi(t)-\phi(0)|\leq 2R \frac{|t|}{r},
	\end{equation}
		this follows from the classical Schwarz Lemma.
			\medskip
	\par We begin with a lemma.

	\begin{lemma}
		Let $  \text{Crit}(P)=\left\lbrace (t,z)|\;\frac{\partial f}{\partial z}(t,z)=0\right\rbrace $, then there exist constants  $ 0<\delta_1<1$ and $ K>K_1>0 $ such that any connected component $ C_k $ of $\text{Crit}(P)\cap \left\lbrace |t|<\delta_1\right\rbrace  $  intersects the line $ \left\lbrace t=0\right\rbrace $  in a unique point, say $ c_k $, and for any point $ x=(t,z)\in \text{Crit}(P) $, say $x\in C_k $, we have
		\begin{equation}
		|z-c_k|\leq K_1 |t|^{\frac{1}{d_1}}.
		\end{equation}
		and
		\begin{equation}
		|f(t,z)-p(c_k)| \leq K|t|^{\frac{1}{d_1}},
		\end{equation}
		where $ c_k= C_k\cap\left\lbrace t=0\right\rbrace $, and $ d_1 $ is the maximal multiplicity of critical points of $ p $. 
	\end{lemma}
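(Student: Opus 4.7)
The plan is to localize the critical set near each critical point of the restricted polynomial $p$ and use the Weierstrass preparation theorem (equivalently, continuous dependence of roots of monic polynomials on parameters). Denote the critical points of $p$ by $c_1,\ldots,c_q$ with multiplicities $m_1,\ldots,m_q$, so that $p'(z)=a\prod_{k=1}^q(z-c_k)^{m_k}$ and $d_1=\max_k m_k$. Since $f$ has degree $d$ in $z$ with $a_d(0)\neq 0$, the function $\partial f/\partial z(t,z)$ is a polynomial of degree $d-1$ in $z$ whose coefficients depend holomorphically on $t$, and whose roots at $t=0$ are the $c_k$'s with exactly $m_k$ of them coinciding at $c_k$. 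Hence for $\delta_1>0$ small enough, $\mathrm{Crit}(P)\cap\{|t|<\delta_1\}$ splits into disjoint open pieces each clustering around one $c_k$; I label these connected components $C_k$, which proves the uniqueness of the intersection with $\{t=0\}$.

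On such a cluster, Weierstrass preparation applied to $\partial f/\partial z$ at $(0,c_k)$ gives a factorization
\begin{equation*}
\frac{\partial f}{\partial z}(t,z)=u_k(t,z)\bigl((z-c_k)^{m_k}+b_1(t)(z-c_k)^{m_k-1}+\cdots+b_{m_k}(t)\bigr),
\end{equation*}
where $u_k$ is a nonvanishing holomorphic unit and each $b_i$ is holomorphic with $b_i(0)=0$. For any root $z$ of this Weierstrass factor, Cauchy's root bound yields
\begin{equation*}
|z-c_k|\leq \max_{1\leq i\leq m_k}\bigl(m_k\,|b_i(t)|\bigr)^{1/i}.
\end{equation*}
Using $|b_i(t)|\leq C|t|$ and the elementary inequality $|t|^{1/i}\leq|t|^{1/d_1}$ valid whenever $i\leq m_k\leq d_1$ and $|t|<1$, I conclude $|z-c_k|\leq K_1|t|^{1/d_1}$ for a suitable constant $K_1$, proving (4.2).

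For estimate (4.3), I would Taylor expand $f$ at $(0,c_k)$:
\begin{equation*}
f(t,z)-p(c_k)=\frac{\partial f}{\partial t}(0,c_k)\,t+p'(c_k)(z-c_k)+O\bigl(|t|^2+|t|\,|z-c_k|+|z-c_k|^2\bigr).
\end{equation*}
Since $c_k$ is a critical point of $p$, the term $p'(c_k)(z-c_k)$ vanishes. Substituting the bound (4.2) together with $|t|\leq|t|^{1/d_1}$ and $|z-c_k|^2\leq K_1^2|t|^{2/d_1}\leq K_1^2|t|^{1/d_1}$ (for $|t|<1$) shows each remaining term is $O(|t|^{1/d_1})$, yielding (4.3) with some $K>K_1$. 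The main technical step is the initial geometric separation of the connected components $C_k$ and the invocation of Weierstrass preparation at each $(0,c_k)$; the estimates themselves are then just Cauchy's bound on monic polynomials and a Taylor expansion exploiting the criticality condition $p'(c_k)=0$.
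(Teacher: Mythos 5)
Your argument is correct and follows essentially the same route as the paper: Weierstrass preparation to isolate the components $C_k$ over a small $t$-disk, a Cauchy-type root bound on the Weierstrass polynomial for (4.2) (the paper runs the identical computation as a proof by contradiction), and a first-order estimate for (4.3). The only cosmetic difference is in (4.3), where the paper simply bounds $|f(t,z)-f(0,c_k)|\leq M'\left(|t|+|z-c_k|\right)$ via Lipschitz constants on a compact neighborhood of the critical set, so the cancellation $p'(c_k)=0$ that you invoke is not actually needed there.
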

	\begin{proof}
		Since $ \text{Crit}(P) $ is an analytic variety,  by Weirstrass preparation theorem we can let $ \delta_1 <1$ small enough so that $\text{Crit}(P)\cap \left\lbrace |t|<\delta_1\right\rbrace =\cup_{k=1}^l C_k $ where $ C_k $, $1\leq k\leq l$ are local connected analytic sets, $ C_k\cap \left\lbrace t=0\right\rbrace =\left\lbrace c_k\right\rbrace $. For each fixed component $ C$ intersect $ \left\lbrace t=0\right\rbrace $ at $c$, $ C $ is given by the zero set of a Weirstrass polynomial,
		\begin{equation*}
		C=\left\lbrace (t,z)\in \left\lbrace |t|<\delta_1\right\rbrace \times\mathbb{C}, g(t,z)=0\right\rbrace,
		\end{equation*}
		where $ g(t,z)=(z-c)^{m}+a_{m-1}(t) (z-c)^{m-1}+\cdots+a_0(t) $ is a Weirstrass polynomial, $ m\leq d_1 $ is an integer, $ a_i(t) $ are holomorphic functions in $ t $ satisfying  $ |a_i(t)|\leq M|t| $ for some constant $ M>0 $ .
		\par We show that 
		\begin{equation*}
		|z-c|\leq mM |t|^{\frac{1}{m}}.
		\end{equation*}
		We argue by contradiction. Suppose there exist a point $ (t_0,z_0)\in C $ such that $ \frac{|z_0-c|}{|t_0|^{\frac{1}{m}}}=a>mM $,  then we have
	\begin{equation*}
	|z_0-c|^m=a^m|t_0|,
	\end{equation*}
	and
	\begin{align}
	|a_{m-1}(t_0) (z_0-c)^{m-1}+\cdots+a_0(t_0) |\leq mMa^{m-1}|t_0|.
	\end{align}
	Thus we have $ |z_0-c|^m> 	|a_{m-1}(t_0) (z_0-c)^{m-1}+\cdots+a_0(t_0) | $, which contradicts to $ (t_0,z_0)\in C $ .
	Setting $ K_1=2d_1M$ we get (4.2).
		\par Let $ \Omega $ be a relatively compact open set that contains $ \text{Crit}(P)\cap \left\lbrace |t|<\delta_1\right\rbrace$. Let 
		\begin{equation*}
		M'=\max\left\lbrace \left| \frac{\partial f}{\partial z}\right| ,\left| \frac{\partial f}{\partial t}\right|:\;(t,z)\in \Omega\right\rbrace .
		\end{equation*}
		Then for $ (t,z)\in C_k $ we have
		\begin{align*}
		|f(t,z)-p(c_k)|&\leq M'|t|+M'|z-c_k|\\&\leq M'(1+K_1)|t|^{\frac{1}{d_1}}.
		\end{align*}
		To get (4.3) we set $ K=2\max\left\lbrace M'(1+K_1),2R\right\rbrace  $. Thus the proof is complete.
	\end{proof}
\begin{remark}
We note that $ K_1$ and $K$ are invariant under a local coordinate change of the form $ t\mapsto \phi(t) $ with $ \phi(0)=0 $ and $ \phi'(0)=1 $. To see this let $ a_i(t) $ be the coefficients of the Weirstrass polynomial, the coordinate change $ t\mapsto \phi(t) $ with $ \phi'(0)=1 $ takes $ a_i(t) $ become $ a_i(\phi(t)) $. We have $ |a_i(\phi(t)|\leq 2M|t| $  by shrinking $ \delta_1(\phi) $ if necessary , then we get (4.2) with the same constant $ K_1 $ (this is the reason for the constant 2 in definition of $ K_1 $). By shrinking $ \delta_1(\phi) $ we see that $ \Omega $ and $ R $ are invariant, and 
\begin{equation*}
\max\left\lbrace \left| \frac{\partial f(\phi(t),z)}{\partial z}\right| ,\left| \frac{\partial f(\phi(t),z)}{\partial t}\right|:\;(t,z)\in \Omega\right\rbrace\leq 2M' .
\end{equation*}
 By the same reason we get (4.3) with the same constant $ K $.
\end{remark}

	\medskip
	We are going to prove the following estimate of $ r(z) $ under the assumption that the multiplier $ \lambda $ is sufficiently small.
	\begin{theorem}
		There exist a constant $ \lambda_1=\lambda_1(f)>0 $ such that for fixed $ |\lambda|<\lambda_1$, there are constants $ k>0 $, $ l>0 $ such that for any point $ z\in F(p)\cap\left\lbrace |z|<R\right\rbrace  $, 
		\begin{equation*}
		r(z)\geq k\;d(z,J(p))^l,
		\end{equation*}
		here $ J(p) $ is the Julia set of $ p $ in the invariant fiber. Furthermore $l$ depends only on $p$.
	\end{theorem}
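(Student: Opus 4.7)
The plan is to prove Theorem 4.3 by pulling back a horizontal disk along the forward $p$-orbit of $z$, starting from a ``safe region'' inside the bulging Fatou components of Section 3 and maintaining quantitative control of the size via Lemma 4.1 and Corollary 2.7. The first step is to fix a compact \emph{safe region} $K\subset F(p)\cap\{|z|\leq R\}$ enjoying two properties: (i) every forward $p$-orbit in $F(p)\cap\{|z|<R\}$ enters $K$ after finitely many iterations, and (ii) there is a uniform $\delta_*>0$ with $r(w)\geq\delta_*$ for every $w\in K$. For (ii), the bulging structure from Section 3 exhibits product-type neighborhoods inside each type of bulging Fatou component: a small product neighborhood of each attracting fixed point; the petals $U_j$ from property (3) of Theorem 3.4; and the Siegel neighborhood $\Omega=\{|t|<\delta\}\times D$ from the proof of Theorem 3.4. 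The time $n=n(z)$ for the orbit to enter $K$ can be bounded by $C_0(1+\log(1/d(z,J(p))))$ using the Koenigs/Fatou/rotation coordinates on the respective Fatou components.

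Next, starting from a horizontal disk $\phi_n$ of size $\delta_*$ centered at $(0,p^n(z))$, I would construct pull-backs $\phi_j$ of size $\delta_j$ at $(0,p^j(z))$ by solving implicitly $f(t,\phi_{j-1}(t))=\phi_j(\lambda t)$ and taking the branch through $p^{j-1}(z)$. The maximal pull-back size is constrained by (a) the identity requiring $|\lambda t|<\delta_j$, hence $|t|<\delta_j/|\lambda|$; (b) the graph must avoid $\text{Crit}(P)$, which by Lemma 4.1 lies within $K_1|t|^{1/d_1}$ of the critical points $c_k$ of $p$; and (c) $|\phi_{j-1}(t)|\leq R$, which is controlled by the Schwarz-type inequality (4.1). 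Combining these, and following the single-step analysis already present in Lilov's super-attracting argument (now using Lemma 4.1 in place of the super-attracting structure), one obtains a recursion
\begin{equation*}
\delta_{j-1}\;\geq\; c\cdot\min\!\bigl(\delta_j/|\lambda|,\;d_{j-1}^{d_1}\bigr),
\end{equation*}
where $d_{j-1}=d(p^{j-1}(z),J(p))$ and $c>0$ depends only on $f$.

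Iterating this recursion from $j=n$ down to $j=0$, the resulting estimate is governed by the latest index $j_*$ at which the capping term $c\,d_{j_*-1}^{d_1}$ binds, giving $\delta_0\geq c'\,d_{j_*}^{d_1}\,|\lambda|^{-j_*}$. Applying Corollary 2.7 to $p$ yields $d_{j_*}\geq d(z,J(p))^{1/\rho}\,s^{-j_*/\rho}$, whence
\begin{equation*}
\delta_0\;\geq\; c'\,d(z,J(p))^{d_1/\rho}\,\bigl(|\lambda|\,s^{d_1/\rho}\bigr)^{-j_*}.
\end{equation*}
Choosing $\lambda_1$ so that $\lambda_1\,s^{d_1/\rho}\leq 1$ makes the bracketed factor $\geq 1$ for every $|\lambda|<\lambda_1$, hence $r(z)\geq k\,d(z,J(p))^l$ with $l=d_1/\rho$ depending only on $p$ (through $d_1$ and $\rho$) and $k=c'$ depending on $f$.

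The main obstacle is the single-step estimate: making the recursion rigorous requires showing that the implicitly defined branch $\phi_{j-1}$ genuinely extends as a horizontal graph bounded by $R$ out to the claimed size, carefully balancing the Schwarz-type control (4.1) on the variation of $\phi_{j-1}$ against the $|t|^{1/d_1}$-thickness of $\text{Crit}(P)$ near the critical points of $p$, and handling the distortion of the inverse branch of $f(t,\cdot)$. Once this single-step estimate is in place, the global bound follows mechanically from DPU. A secondary subtlety is verifying uniformity of $r(w)$ on $K$ in the Siegel case, which uses the interpretation of the coordinate changes appearing in the proof of Theorem 3.4.
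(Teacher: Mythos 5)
Your overall strategy --- enter a safe region, then pull back a horizontal graph along the orbit, controlling the loss with Lemma 4.1, inequality (4.1) and Corollary 2.7 --- is the same as the paper's (its set $W$ and its Pull Back Lemma 4.4). But the single-step recursion you write down is not correct, and the error occurs exactly where the paper needs its key ingredient. When you pull $\phi_j$ back through the branch of $f_t^{-1}$ at $p^{j-1}(z)$, the graph of $t\mapsto\phi_j(\lambda t)$ must avoid the critical values of $f_t$; by (4.1) this graph moves by at most $2R|\lambda t|/\delta_j$ from $p^j(z)$, while by Lemma 4.1 the critical values move by at most $K|t|^{1/d_1}$ from $p(c_k)$, so the constraint is $K|\lambda t|/\delta_j+K|t|^{1/d_1}\lesssim d(p^j(z),p(C(p)))\approx d(p^{j-1}(z),C(p))^{d_1+1}$. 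The resulting dichotomy is therefore of the form $\delta_{j-1}\gtrsim\min\bigl(\frac{\delta_j}{|\lambda|}\,d(p^{j-1}(z),C(p))^{d_1+1},\ d(p^{j-1}(z),C(p))^{d_1(d_1+1)}\bigr)$: the inverse-branch alternative necessarily carries a multiplicative factor which is a power of the distance from $p^{j-1}(z)$ to the critical points on $J(p)$. Your recursion $\delta_{j-1}\geq c\min(\delta_j/|\lambda|,\,d_{j-1}^{d_1})$ drops that factor entirely, which is why your iteration closes with a single application of Corollary 2.7 (to the one capping index $j_*$) and with the mild condition $\lambda_1 s^{d_1/\rho}\leq1$. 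With the correct recursion the iterated bound contains the product $\prod_{i<j_*}d(p^i(z),C(p))^{d_1+1}$, which a priori is uncontrolled since the orbit may pass arbitrarily close to critical points on $J(p)$ infinitely often; this is precisely what Lemma 2.5 (DPU) is for: it bounds $\sum_i-\log d(p^i(z),C(p))$ by $Qm$ plus at most $q$ exceptional terms, the exceptional terms are then handled by Corollary 2.7, and finally the factor $(\alpha/|\lambda|)^m$ absorbs the $e^{-Cm}$ loss once $|\lambda|$ is small (condition (4.10)). Your sketch invokes DPU only as a closing remark and never actually uses Lemma 2.5; without it the argument does not close.

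A second gap is the uniform lower bound $r\geq\delta_*$ on your safe region in the Siegel case. The linearizing map $\psi(t,z)=(t,\phi_t(z))$ of Theorem 3.4 is only guaranteed invertible after shrinking $\Omega$, and the horizontal size of the resulting neighborhood may degenerate as $z$ approaches $\partial D\subset J(p)$; accordingly the paper does not claim $r$ is uniformly bounded below on the Siegel disk, but proves the weaker estimate $r(z)\geq k\,d(z,J(p))^l$ there separately (Lemma 4.5), again via the Pull Back Lemma and DPU. (Your bound $n(z)\leq C_0(1+\log(1/d(z,J(p))))$ on the entry time is also false in parabolic basins, where the approach to the petal is polynomial rather than exponential, but this is harmless since you never use it.)
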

	We would like to give an outline of the proof of Theorem 4.3 first. Since there are only finitely many invariant Fatou components of $ p $, and every Fatou component is pre-periodic to one of them, it is enough to prove Theorem 4.3 holds for $ z $ in the basin of an invariant Fatou component. To do this, we first fix an invariant Fatou component $ U $, and we prove Theorem 4.3 holds for a subset  $W$ satisfying $ \cup_{i=0}^\infty p^{-i}(W) $= the basin of $U$, this is the first step. In step 2,  we use the following Pull Back Lemma to get the relation between $ r(z) $ and $ r(p(z)) $, together with the DPU Lemma we are able to give the estimate for the points in $ p^{-i}(W) $, for every $i$. We start with the Pull Back Lemma.
	
	\begin{lemma}[\bf Pull Back lemma] \label{Pull Back Lemma}
		There exist a constant $ 0<\epsilon<1 $, such that if we let 
		\begin{equation*}
		V=\left\lbrace z\in F(p),d(z,J(p))<\epsilon \right\rbrace ,
		\end{equation*}
		then for any $ z_0\in F(p)\cap \left\lbrace |z|<R\right\rbrace $ such that $ p(z_0)\in V $, at least one of the following holds:
		\begin{equation}
		r(z_0)\geq \frac{\alpha}{|\lambda|} r(p(z_0))d(z_0,C(p))^{d_1(d_1+1)};
		\end{equation}
		or
		\begin{equation}
		r(z_0)\geq \beta \;d(z_0,J(p))^{d_1(d_1+1)}.
		\end{equation}
		Here $\alpha, \beta $ are positive constants only depending on $ p $ and the constant $ K $ from Lemma 4.1, and $ C(p) $ is the set of critical points lying in $J(p)$.
	\end{lemma}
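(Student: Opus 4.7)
The strategy is to construct, via pull-back under $P$, a horizontal holomorphic disk through $z_0$ starting from a nearly-maximal one through $p(z_0)$. Fix a horizontal disk $\Gamma_\phi = \{(t', \phi(t')) : |t'| < r(p(z_0)) - \eta\}$ through $p(z_0)$, contained in $F(P) \cap \{|z| \le R\}$, with $\phi(0) = p(z_0)$ and $\eta > 0$ sent to $0$ at the end. Setting $F(t,z) := f(t,z) - \phi(\lambda t)$, producing a pull-back $\psi : \{|t| < \delta\} \to \mathbb{C}$ with $\psi(0) = z_0$ and $P(t, \psi(t)) \in \Gamma_\phi$ amounts to solving
\begin{equation*}
F(t, \psi(t)) = 0, \qquad \psi(0) = z_0.
\end{equation*}
The natural $t$-radius is $\delta \le (r(p(z_0)) - \eta)/|\lambda|$, which accounts for the $1/|\lambda|$ factor in (4.5).

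The core step is a quantitative implicit function theorem for $F$ at $(0, z_0)$. Since $\partial_z F(0, z_0) = p'(z_0)$, the radius of existence is controlled by $|p'(z_0)|$ together with sup-norm bounds on $\partial_t f$ and $\partial_z^2 f$ on a fixed compact neighborhood, and the Schwarz-type bound $|\phi'(0)| \le 2R/r(p(z_0))$ supplied by (4.1). Factoring $p'$ gives $|p'(z_0)| \ge \alpha_1 \, d(z_0, C'(p))^{d_1-1}$, where $C'(p)$ is the full critical set of $p$ and $d_1$ the maximum multiplicity. Combining this with (4.1), Lemma 4.1, and the IFT bound produces a pull-back of $t$-size at least $\frac{\alpha}{|\lambda|} r(p(z_0)) \, d(z_0, C(p))^{d_1(d_1+1)}$, which is (4.5). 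The reduction from $C'(p)$ to $C(p)$ proceeds by observing that critical points of $p$ in $F(p)$ lie in bulging Fatou components whose horizontal size is already bounded below by the constructions in Section 3, so proximity of $z_0$ to such points either is harmless (because $z_0$ lies in the same bulging component) or is dominated by $d(z_0, C(p))$.

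The alternative inequality (4.6) covers the case where the above pull-back construction fails, typically when $z_0$ is so close to $C(p)$ that the IFT radius is smaller than what (4.6) itself demands, or when the pull-back would force the graph to leave $\{|z| \le R\}$. Since $C(p) \subset J(p)$ one has $d(z_0, J(p)) \le d(z_0, C(p))$, and one instead constructs a horizontal disk through $z_0$ of $t$-size a power of $d(z_0, J(p))$ directly: Lemma 4.1 places the critical values of $f(t, \cdot)$ within $K|t|^{1/d_1}$ of the $p(c_k) \in J(p)$, so for $|t|$ small relative to $d(z_0, J(p))^{d_1}$, one can follow a holomorphic determination of $z = \psi(t)$ that avoids all critical values of the slices and stays in $F(P) \cap \{|z| \le R\}$. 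The threshold $\epsilon$ defining $V$ is chosen small enough for this construction to fit and for the Schwarz bound to remain effective.

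The main obstacle is the careful bookkeeping of constants needed to obtain a uniform exponent $d_1(d_1+1)$ in both (4.5) and (4.6). This exponent is conservative: morally, a factor $d_1-1$ from each of two uses of the polynomial lower bound on $|p'(z_0)|$ (once in building the pull-back, once in ensuring the graph stays in $\{|z| \le R\}$), combined with the $1/d_1$ exponent appearing in Lemma 4.1. The truly delicate point is matching the two alternatives cleanly and verifying that $\alpha$ and $\beta$ depend only on $p$ and on the constant $K$ of Lemma 4.1; once the geometric dichotomy is set up, the rest is a routine combination of the quantitative implicit function theorem, the Schwarz estimate (4.1), and Lemma 4.1.
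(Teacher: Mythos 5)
Your proposal replaces the paper's mechanism with a different one, and the replacement has real gaps. The paper does not invoke an implicit function theorem at $(0,z_0)$ controlled by $|p'(z_0)|$. Instead it analytically continues the branch $g_t$ of $f_t^{-1}$ along the graph $t\mapsto\phi(\lambda t)$, and observes that the only obstruction is this graph meeting the critical value set of $P$. It then bounds from below the first $|t'|$ at which a point $(t',z')\in\mathrm{Crit}(P)$ can satisfy $P(t',z')\in\mathrm{graph}(\phi)$: by the Schwarz estimate (4.1), $|\phi(\lambda t')-\phi(0)|\leq K|\lambda t'|/r(p(z_0))$, and by Lemma 4.1 the critical value $f(t',z')$ is within $K|t'|^{1/d_1}$ of some $p(c_k)$, so $d(p(z_0),p(C(p)))\leq K|\lambda t'|/r(p(z_0))+K|t'|^{1/d_1}$; the two alternatives (4.5)/(4.6) come from which term dominates (essentially whether $r(p(z_0))\leq|\lambda|$ or not), not from ``whether the pull-back construction fails.'' The governing quantity is thus $d(p(z_0),p(C(p)))$ at the image point, converted to $d(z_0,C(p))^{d_1+1}$ at the source. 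Your purely local IFT at $(0,z_0)$ cannot reach the claimed radius: the guaranteed radius of a quantitative IFT is limited to a $z$-disk of size $\sim|p'(z_0)|/\sup|\partial_z^2 f|$ on which $\partial_z f$ stays bounded below, and to continue the solution out to $|t|\sim\frac{1}{|\lambda|}r(p(z_0))\cdot(\cdots)$ you must rule out the graph meeting $\mathrm{Crit}(P)$ on that whole range --- which is exactly the global argument via Lemma 4.1 that you relegate, in vague form, to the secondary case (4.6).

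Even granting the IFT route, the key inequality you use is false as stated: for $p(z)=z^2$ one has $d_1=1$ and your bound $|p'(z_0)|\geq\alpha_1 d(z_0,C'(p))^{d_1-1}=\alpha_1$ fails near the critical point. Since $p'$ vanishes to order $\mu_c$ at a critical point $c$ with $\sum_c\mu_c=d-1$, the uniform bound is $|p'(z_0)|\geq \alpha_1\, d(z_0,\mathrm{Crit}(p))^{d-1}$, and the IFT radius then carries the exponent $2(d-1)$ (the derivative enters squared). This is not bounded by $d_1(d_1+1)$ in general --- already for a cubic with two simple critical points in $J(p)$ one gets exponent $4$ versus the stated $2$ --- so your argument would at best prove the lemma with a different, weaker exponent, not the inequalities (4.5)--(4.6) as stated. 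The reduction from $C'(p)$ to $C(p)$ can indeed be handled by choosing $\epsilon$ appropriately (the paper does this by arranging that the critical value of $p$ nearest to $p(z_0)\in V$ comes from $C(p)$), but that choice belongs at the start of the argument, not as an afterthought about bulging components.
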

	\begin{proof}
		Let $ \text{Crit}(p) $ be the set of critical points of $ p $, We choose $ \epsilon $ small such that $ p(z)\in V $ implies 
		$ d(z,p(C(p)))= d(z,p(\text{Crit}(p)) $. Let $ \phi $ be the associated holomorphic function with respect to $ p(z_0) $ with size $ r(p(z_0)) $. We are going to show that the critical value set of $P$ does not intersect the graph of $ \phi $ when the domain of $ \phi $ is small .
		\par Suppose  $x'= (t',z') $ lies in $ \text{Crit}(P) $ satisfying $ t'<r(z_0) $ and $ P(x') $ lying in the graph of $ \phi $. then by Lemma 4.1 the connected component of $ \text{Crit}(P) $ containing $ x' $ intersects $ \left\lbrace t=0 \right\rbrace  $ at a unique point $ c $. Then we have
		\begin{align}
		d(p(z_0),p(C(p))&\leq |p(z_0)-p(c)|\notag\\&=|\phi(0)-\phi(\lambda t')+f(t',z')-p(c)|\notag\\&\leq |\phi(0)-\phi(\lambda t')|+|f(t',z')-p(c)|\notag\\&\leq K \frac{|\lambda t'|}{r(p(z_0))}+K|t'|^{1/d_1}.
		\end{align}
		(4.7) holds by applying Lemma 4.1 and inequality (4.1).
		\par Now there are two possibilities,
		
		\par (a) If $ \frac{|\lambda t'|}{r(p(z_0))}\geq 1 $, then
		\begin{equation*}
		|t'|\geq \frac{r(p(z_0))}{|\lambda|}.
		\end{equation*}
		\par (b) If $ \frac{|\lambda t'|}{r(p(z_0))}<1 $, then
		\begin{equation*}
		\frac{|\lambda t'|}{r(p(z_0))}\leq \frac{|\lambda t'|^{1/d_1}}{r(p(z_0))^{1/d_1}},
		\end{equation*}
		so that by (4.7) we have
		\begin{equation*}
		d(p(z_0),p(C(p))\leq K \frac{|\lambda t'|^{1/d_1}}{r(p(z_0))^{1/d_1}}+K|t'|^{1/d_1}.
		\end{equation*}
		\par  For case (b), there are two subcases,
		\par (b1) If $r(p(z_0))\leq |\lambda| $, then
		\begin{equation*}
		d(p(z_0),p(C(p))\leq 2K \frac{|\lambda t'|^{1/d_1}}{r(p(z_0))^{1/d_1}},
		\end{equation*}
		by applying the fact that there is a constant $ c=c(p)>0 $ such that $ d(p(z_0),p(C(p))\geq c\; d(z_0,C(p))^{d_1+1} $ we have
		\begin{equation*}
		|t'|\geq \frac{\alpha}{|\lambda|} r(p(z_0))d(z_0,C(p))^{d_1(d_1+1)},
		\end{equation*}
		where $ \alpha= \left( \frac{c}{2K}\right) ^{d_1} $.
		
		\par (b2) If $ r(p(z_0))>|\lambda| $, then
		\begin{equation*}
		d(p(z_0),p(C(p))< 2K | t'|^{1/d_1}.
		\end{equation*}
		Thus we have 
		\begin{equation*}
		|t'|>\left( \frac{1}{2K} \right) ^{d_1} d(p(z_0),J(p))^{d_1}.
		\end{equation*}
By applying the fact that there is a constant $ c=c(p)>0 $ such that $ d(p(z_0),J(p))\geq c d(z_0,J(p)^{d_1+1} $, we get
		\begin{equation*}
		|t'|\geq \beta \;d(z_0,J(p))^{d_1(d_1+1)}.
		\end{equation*}
		where $ \beta=c\left( \frac{c}{2K}\right) ^{d_1}$.
		\par We can let $ \alpha $ small enough such that actually $ \alpha d(z_0,C(p))^{d_1(d_1+1)}<1 $, thus for case (b1) we have
		\begin{equation*}
		|t'|\geq \frac{\alpha}{|\lambda|} r(p(z_0))d(z_0,C(p))^{d_1(d_1+1)}\geq \frac{r(p(z_0))}{|\lambda|},
		\end{equation*}
		thus case (a) is actually contained in case (b1).
		\par In either case (b1) or (b2) we get a lower bound on $ t' $. Thus for any $ t $ which does not exceed that lower bound, $ \phi(\lambda t) $ is not a critical value of $ f_t $ and so all branches of $ f^{-1}_t $ are well defined and holomorphic in a neighborhood of the graph of $ \phi $. Therefore, choose $ g_t $ to be the branch of $ f^{-1}_t $ for which $ g_0(f_0(z))=z $, then the function $ \psi(t)= g_t(\phi(\lambda t)) $ is well defined from $ t=0 $ up to $ |t|<\eta$  satisfying $ \psi(0)=z_0 $ and the graph of $ \psi $  containing in the Fatou set, where $ \eta$ is the lower bound from (4.5) and (4.6).  We know that $ \psi $ is also bounded by $ R $, since otherwise $ \phi $ would not be bounded by $ R $. To avoid the case $ |t'|\geq \delta_1 $, we can shrink $ \beta $ such that $ \beta \;d(z_0,J(p))^{d_1(d_1+1)}< \delta_1 $ for all $ z_0 $. Thus $ |t'|\geq\delta_1 $ implies $ |t'|\geq \beta \;d(z_0,J(p))^{d_1(d_1+1)} $. Thus at least one of (4.5) and (4.6) holds.
		
	\end{proof}
	\textit{Proof of Theorem 4.3.} In the following we fix an invariant Fatou component $ U $ of $ p $, denote the basin of $ U $ by $ B $ (If $  B $ is the basin of infinity we let $ B $ be contained in $ \left\lbrace |z|<R\right\rbrace $ ). We can shrink $ \epsilon $ to ensure that the set $ \left\lbrace z\in B,d(z,J(p))< 2\epsilon\right\rbrace $ is contained in $ \left\lbrace |z|<R\right\rbrace $. In either case we first construct a subset $ W $ of $ B $, satisfies the following conditions,

	\smallskip
	\par (1) $ W $ eventually traps the forward orbit of any point in $ B $.
	
	\smallskip
	\par (2) $ W $ contains the compact  subset $ \left\lbrace z\in B,d(z,J(p))\geq \epsilon\right\rbrace  $.
	
	\smallskip
	\par (3) Theorem 4.3 holds for $ z\in W $.
	
	\medskip
	\par Finally we use the Pull  Back Lemma to prove Theorem 4.3 holds for $ z\in B$.
	
	\medskip	
	\noindent
	{\bf  Step 1:} Construction of $ W $. We split the argument in several cases.
	
	\medskip
	\noindent
	{\bf $\bullet$ $ U $ is an immediate attracting basin.} Let $ \omega \subset U$ be a compact neighborhood of the attracting fixed point. We set $ W= \left\lbrace z\in B,d(z,J(p))\geq \epsilon\right\rbrace  \cup \omega $, then $ W $ automatically satisfies (1) and (2). Since $ W $ is also compact and contained in $ F(P) $, there is a lower bound $ a>0 $ such that $ r(z)\geq a $ for every $ z\in W $. So there exist $ k>0 $ such that $ r(z)\geq k\;d(z,J(p)) $ for $z\in W$.
	\medskip
	
	\noindent
	{\bf $\bullet$ $ U $ is the attracting basin of $ \infty $.} We set $ W= \left\lbrace z\in B,d(z,J(p))\geq \epsilon\right\rbrace $, then $ W $ automatically satisfies (1) and (2). There is a lower bound $ a>0 $ such that $ r(z)\geq a $ for every $ z\in W $. So there exist $ k>0 $ such that $ r(z)\geq k\;d(z,J(p)) $.
	\medskip
	
	\noindent
	{\bf $\bullet$ $ U $ is an immediate parabolic basin.} Let $ Q $ be the associated attracting petal of Theorem 3.3. We set $ W= \left\lbrace z\in B,d(z,J(p))\geq \epsilon\right\rbrace  \cup Q $, then $ W $ automatically satisfies (1) and (2). By Theorem 3.3 there is a lower bound $ a>0 $ such that $ r(z)\geq a $ for every $ z\in P $. Thus  there is a lower bound $ b>0 $ such that $ r(z)\geq b $ for every $ z\in W $. So there exist $ k>0 $ such that $ r(z)\geq k\;d(z,J(p)) $.
	\medskip
	
	\noindent
	{\bf $\bullet$ $ U $ is a Siegel disk.} We set $ W=U\cup  \left\lbrace z\in B,d(z,J(p))\geq \epsilon\right\rbrace $, then $ W $ automatically satisfies (1) and (2). To prove (3), it is enough to prove (3) for $ z\in U $. 
	\begin{lemma}
		Let $ U $ be a Siegel disk, then there are constants $ k>0 $, $ l>0 $ such that for any point $ z\in U  $, 
		\begin{equation*}
		r(z)\geq k\;d(z,J(p))^l.
		\end{equation*}
		Further more $l$ only depends on $p$.
	\end{lemma}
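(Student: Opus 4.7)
The plan is to exploit Theorem 3.4, which provides a full two-dimensional neighborhood $\Omega$ of $D = U$ on which $P$ is biholomorphically conjugate to the linear map $(t,z) \mapsto (\lambda t, e^{i\theta} z)$. Inspection of the proof of Theorem 3.4 shows that after linearizing $p$ to $z \mapsto e^{i\theta} z$ and straightening the horizontal stable manifold via $z \mapsto z + \phi(t)$ with $\phi(0) = 0$, one obtains $\Omega$ of product form $\{|t| < \delta\} \times D$ in these new coordinates, and this $\Omega$ is contained in $F(P)$ since $\{P^n\}$ is normal there.

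In these product coordinates, for every $z \in D$ the constant graph $t \mapsto z$ on $\{|t| < \delta\}$ is a horizontal holomorphic disk of size $\delta$ through $(0, z)$ contained in $\Omega$. I would then pull this disk back to the original coordinates of $P$: the two coordinate changes (the linearization of $p$ near the Siegel fixed point and the straightening of the stable manifold) are both of the form $(t,z) \mapsto (t, \tau_t(z))$ with $\tau_t$ holomorphic in both variables and $\tau_0 = \mathrm{id}$, so they transform horizontal holomorphic disks into horizontal holomorphic disks of the same size while fixing points of the form $(0, z)$. This produces, for every $z \in U$, a horizontal holomorphic disk of size $\delta$ through $(0, z)$ contained in $F(P)$. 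By shrinking $\delta$ once and for all if necessary, I can ensure that this disk also lies in $\{|z| \leq R\}$, using $U \Subset \{|z| < R\}$ together with the continuity of the coordinate changes. Hence $r(z) \geq \delta/2 =: a$ uniformly for all $z \in U$.

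To conclude, since $U$ is a bounded Siegel disk, $M := \sup_{z \in U} d(z, J(p)) < \infty$. Taking $l = 1$ and $k = a/M$ yields
\begin{equation*}
r(z) \geq a \geq k \cdot d(z, J(p)) = k \cdot d(z, J(p))^{l}
\end{equation*}
with $l = 1$ depending only on $p$ (in fact, depending on nothing). The main point requiring care is the bookkeeping for the two coordinate changes, namely checking that they preserve both the graph structure and the size of horizontal disks and that the $\delta$ obtained is genuinely uniform in $z \in U$. Both are straightforward from the explicit form of the changes of variables. The real content of the lemma is the product structure $\Omega = \{|t| < \delta\} \times D$ given by Theorem 3.4, which immediately yields a uniform positive lower bound for $r$ on all of $U$; no appeal to the DPU machinery is needed in this case.
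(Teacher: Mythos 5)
Your argument hinges on extracting from the proof of Theorem 3.4 a product neighborhood $\{|t|<\delta\}\times D$ of the \emph{entire} Siegel disk, with $\delta>0$ uniform, contained in $F(P)$. That is more than Theorem 3.4 delivers. Its statement only provides ``a neighborhood $\Omega$ of $D$'', which is allowed to taper to zero thickness as one approaches $\partial D$, and the uniform product written in its proof is not justified up to the boundary. The reason is the linearizing coordinate of the Siegel disk: the conformal map $h\colon D\to\mathbb{D}(0,\rho)$ conjugating $p|_D$ to the rotation does not extend past $\partial D$, so the conjugated map $(t,z)\mapsto(\lambda t,\,e^{i\theta}z+tg(t,z))$ is only defined where orbits stay inside $D$, and the constant $C=\sup|g|$ is finite only after restricting $z$ to a compact subset of $D$ (indeed $g(0,z)=h'\bigl(p(h^{-1}(z))\bigr)\,\frac{\partial f}{\partial t}(0,h^{-1}(z))$, and $h'$ is in general unbounded near $\partial D$). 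Consequently the construction yields a product neighborhood of uniform thickness only over each compact subset of $D$, with thickness depending on that subset --- which is exactly how the paper uses it: ``there is a lower bound $a>0$ such that $r(z)\geq a$ for $z\in S$, $a$ depending on $S$'', where $S$ is the (compact) closure of a single orbit.

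A sanity check that something must be off: if $r$ were bounded below by a positive constant on all of $U$, Lemma 4.5 would hold with $l=1$, with no smallness assumption on $\lambda$, and the Siegel-disk case of Theorem 4.3 would be vacuous. The paper's proof instead fixes $z_0\in U$, notes that its orbit closure $S$ is compact in $U$ so that $r\geq a(S)$ along the orbit, and then uses the Pull Back Lemma 4.4 together with the DPU estimate to propagate that bound back to $z_0$, at the cost of the factor $d(z_0,J(p))^l$ and of taking $|\lambda|$ small enough that alternative (4.5) cannot hold at every step. That pull-back mechanism is the actual content of the lemma and is what your proposal omits. Your bookkeeping of the coordinate changes (they preserve graphs over $\{|t|<\delta\}$ and their size) and the final reduction $k=a/M$, $l=1$ would be fine \emph{if} a uniform $\delta$ existed, but it is not available in general.
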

\begin{proof}
Since the technique of the proof is similar to that of Theorem 4.3, we postpone the proof to the end of this subsection.
\end{proof}
	\medskip	
\noindent
{\bf  Step 2:} Pull back argument.

\medskip
	\par We already have the estimate for $ z\in W $. For every $ z_0\in B \backslash W $, let $ \left\lbrace z_i\right\rbrace_{i\geq 0} $ be its forward orbit, and let $ n $ be the smallest integer such that $ z_n $ lies in $ W$. Let $ m $ be the smallest integer such that case (4.5) does not happen, if this $ m $ dose not exist, let $ m=n $, in either case we have
	\begin{equation*}
	r(z_m)\geq k \;d(z_m,J(p))^l,
	\end{equation*}
	for some $ k,l>0 $,
 and for all $ z_i,\; 0\leq i\leq m-1 $, we have
	\begin{equation}
	r(z_i)\geq \frac{\alpha}{|\lambda|} r(z_{i+1}))d(z_i,C(p))^{d_1(d_1+1)}. 
	\end{equation} 
	\smallskip

	By (4.8) we have 
	\begin{align*}
	\log r(z_i)&\geq  \log r(z_{i+1})+\log d(z_i,C(p))^{d_1(d_1+1)}+\log\frac{\alpha}{|\lambda|}\\&= \log r(z_{i+1})-d_1(d_1+1) k(z_i)+\log\frac{\alpha}{|\lambda|},
	\end{align*}
	for all $ 0\leq i\leq m-1 $, where $ k(z_i) $ is as in Lemma 2.5.
	\par Thus we have
	\begin{align*}
	\log r(z_0)&\geq  \log r(z_{m})-d_1(d_1+1)\sum_{i=0}^{m-1} k(z_i)+m\log\frac{\alpha}{|\lambda|}.
	\end{align*}
	By Lemma 2.5 there exist a subset $ \left\lbrace i_1, \cdots, i_{q'}\right\rbrace \subset \left\lbrace 0, 1, \cdots, m-1 \right\rbrace $ such that
	\begin{equation*}
	\sum_{i=0}^{m-1}k(z_i)-\sum_{\alpha=1}^{q'} k(z_{i_\alpha})\leq Qm.
	\end{equation*}
	Therefore we have
	\begin{align}
	\log r(z_0)&\geq\log r(z_{m})-d_1(d_1+1) \sum_{\alpha=1}^{q'} k(z_{i_\alpha})-d_1(d_1+1)Qm+m\log\frac{\alpha}{|\lambda|}\notag\\&\geq\log r(z_{m})+d_1(d_1+1) \sum_{\alpha=1}^{q'}\log d(z_{i_\alpha},J(p))-d_1(d_1+1)Qm+m\log\frac{\alpha}{|\lambda|}.
	\end{align}
	By Corollary 2.7 we have for each $ i_\alpha $,
	\begin{align*}
	\log d(z_{i_\alpha},J(p))&\geq \frac{1}{\rho}\log d(z_0,J(p))-\frac{1}{\rho}i_\alpha\log s\\&\geq \frac{1}{\rho}\log d(z_0,J(p))-\frac{1}{\rho}m\log s.
	\end{align*}
   Likewise we have,
    \begin{align*}
    \log r(z_{m})&\geq \log k+l\log d(z_m,J(p))\\&\geq \log k+\frac{l}{\rho}\log d(z_0,J(p))-\frac{l}{\rho}m\log s.
    \end{align*}
		Thus applying the estimates of $ \log d(z_{i_\alpha},J(p)) $ and $ \log r(z_{m}) $ to (4.9) gives 
	\begin{align*}
	\log r(z_0)&\geq\log r(z_{m})+d_1(d_1+1) \sum_{\alpha=1}^{q'}\log d(z_{i_\alpha},J(p))-d_1(d_1+1)Qm+m\log\frac{\alpha}{|\lambda|}\\&\geq\log k+\frac{l+qd_1(d_1+1)}{\rho}\log d(z_0,J(p))-\frac{l+qd_1(d_1+1)}{\rho}m\log s-d_1(d_1+1)Qm+m\log\frac{\alpha}{|\lambda|}.
	\end{align*}
	Let us now fix $ \lambda_1 $ so small such that
	\begin{equation}
	\log\frac{\alpha}{\lambda_1}\geq \frac{l+qd_1(d_1+1)}{\rho}\log s+d_1(d_1+1)Q,
	\end{equation}
	then for every $ |\lambda|<\lambda_1 $ we have
	\begin{equation*}
	\log r(z_0)\geq \log k+\frac{l+qd_1(d_1+1)}{\rho}\log d(z_0,J(p)),
	\end{equation*}
which is equivalent to
	\begin{equation*}
	r(z_0)\geq k\; d(z_0,J(p))^{l'},
	\end{equation*}
	where $ l'=\frac{l+qd_1(d_1+1)}{\rho} $.
	
	\par We have shown that there are constants $ k>0 $, $ l'>0 $ such that $ r(z)\geq k\;d(z,J(p))^{l'}$  for $ z\in B $, and $l'$ only depends on $p$,  this finishes the proof of Theorem 4.3.
	\qed
	\medskip
	\par 	\textit{Proof of Lemma 4.5} 
	It is enough to prove the estimate for an invariant subset $U_\epsilon\subset  U \backslash  \left\lbrace z\in B,d(z,J(p))\geq \epsilon\right\rbrace  $. First note that the conclusion of Lemma 4.4 holds for all $ z_0\in U_\epsilon $, since for all $ z_0\in U_\epsilon $ the condition $ p(z_0)\in V $ holds. Since $ U $ is a Siegel disk, the forward orbit $ \left\lbrace z_n\right\rbrace_{n\geq 0}$  lies in a compact subset $ S $ of $ U $, where  $ z_n=p^n(z_0) $. Thus there is a lower bound $ a >0$ such that $ r(z)\geq a $ for $ z\in S $, $ a $ depending on $ S $. By Lemma 4.4 there are two cases,
	\medskip
	
	\par (1) There is no such integer $n$ that $ r(z_n)\geq \beta \;d(z_n,J(p))^{(d_1+1)d_1}  $, thus all $ z_n $ satisfy $ 	r(z_n)\geq \frac{\alpha}{|\lambda|} r(z_{n+1})d(z_n,C(p))^{d_1(d_1+1)}.$
	\smallskip
	\par (2) There is an integer $ n $ such that $ r(z_n)\geq \beta \;d(z_n,J(p))^{d_1(d_1+1)}  $.
	\medskip
	
	In  case (1) for every $ i\geq 0 $
	\begin{align*}
	\log r(z_i)&\geq  \log r(z_{i+1})+\log d(z_i,C(p))^{d_1(d_1+1)}+\log\frac{\alpha}{|\lambda|}\\&= \log r(z_{i+1})-d_1(d_1+1) k(z_i)+\log\frac{\alpha}{|\lambda|}.
	\end{align*}
	Thus we have for every $ n\geq 0 $,
	\begin{align*}
	\log r(z_0)&\geq  \log r(z_{n})-d_1(d_1+1)\sum_{i=0}^{n-1} k(z_i)+n\log\frac{\alpha}{|\lambda|}\\&\geq \log a+\frac{qd_1(d_1+1)}{\rho}\log d(z_0,J(p))-\frac{qd_1(d_1+1)}{\rho}n\log s-d_1(d_1+1)Qn+n\log\frac{\alpha}{|\lambda|}.
	\end{align*}
	Let us now fix $ |\lambda_1| $ so small such that
	\begin{equation}
	\log\frac{\alpha}{\lambda_1}\geq \frac{l+qd_1(d_1+1)}{\rho}\log s+d_1(d_1+1)Q+1,
	\end{equation}
	thus for every $ |\lambda|<\lambda_1 $ we have
	\begin{align*}
	\log r(z_0)&\geq\log a+\frac{qd_1(d_1+1)}{\rho}\log d(z_0,J(p))+n.
	\end{align*}
	Let $ n\to\infty $ then $r(z_0)$ can be arbitrary large, which is a contradiction, thus actually case (1) can not happen.
	
	\par For the case (2), the proof is same as the proof of Theorem 4.3, thus the proof is complete.
	\qed
	\begin{remark}
	The constant $ \lambda_1 $ appearing in Theorem 4.3 is invariant under  local coordinate change $ t\mapsto \phi(t) $ with $\phi(0)=0$ and $ \phi'(0)=1 $. To see this from Lemma 4.4 and Remark 4.2 we know that $ \alpha $ is invariant since it only depends on $ p $ and $ K $. By (4.10) and (4.11) $ \lambda_1 $ only depends on $ \alpha $ and $ p $, hence $ \lambda_1(f) $ is invariant.
	\end{remark}
	
	\section{Estimate of size of forward images of vertical Fatou disks}
	In this section we adapt the DPU Lemma to the attracting local polynomial skew product case, to show that the size of forward images of a wandering vertical Fatou disk shrinks slowly. We begin with two classical lemmas. We follow Lilov's presentation.
	
		\begin{lemma}
		There exist $ c_0>0 $ depending only on $ p $ and $ \delta_2>0 $ such that when $ |t_0|<\delta_2 $, let $ \Delta(x,r)\subset \left\lbrace t=t_0 \right\rbrace  $ be an arbitrary vertical disk, then $ P(\Delta(x,r)) $ contains a disk $ \Delta(P(x),r')\subset \left\lbrace t=\lambda t_0 \right\rbrace  $ of radius $ \geq c_0r^d $.
	\end{lemma}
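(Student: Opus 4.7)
The plan is to reduce the statement to a one-variable distortion estimate and then use uniform continuity in $t_0$. Since $P(t,z)=(\lambda t,f(t,z))$, writing $x=(t_0,z_0)$ and using $\pi_2$ to identify the vertical fiber with $\mathbb{C}$, the image $P(\Delta(x,r))$ corresponds to $f(t_0,D(z_0,r))\subset\mathbb{C}$. Hence it suffices to prove the following: for the polynomial $q(z):=f(t_0,z)$ of degree $d$ in $z$ with leading coefficient $a_d(t_0)$, the image $q(D(z_0,r))$ contains the disk $D(q(z_0),\,|a_d(t_0)|\,r^d)$.

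The key one-variable fact will follow from Vieta's formulas applied after rescaling. Set $h(w):=q(z_0+rw)-q(z_0)$, which is a polynomial in $w$ of degree $d$ with $h(0)=0$ and leading coefficient $a_d(t_0)\,r^d$. For any $\zeta\in\mathbb{C}$ with $|\zeta|<|a_d(t_0)|\,r^d$, factor
\begin{equation*}
h(w)-\zeta=a_d(t_0)\,r^d\prod_{i=1}^{d}(w-w_i).
\end{equation*}
Since $h(0)=0$, the constant term of $h(w)-\zeta$ equals $-\zeta$, so Vieta's formulas give
\begin{equation*}
\left|\prod_{i=1}^{d}w_i\right|=\frac{|\zeta|}{|a_d(t_0)|\,r^d}<1,
\end{equation*}
and consequently at least one of the roots satisfies $|w_i|<1$. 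This $w_i$ is a preimage of $\zeta$ under $h$ in $D(0,1)$, which corresponds to a preimage of $q(z_0)+\zeta$ under $q$ in $D(z_0,r)$. Thus $q(D(z_0,r))\supset D(q(z_0),|a_d(t_0)|\,r^d)$.

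To obtain a $t_0$-independent constant, observe that $a_d(t)$ is holomorphic near $0$ with $a_d(0)\neq 0$ by our standing assumption on $f$. Hence there exists $\delta_2>0$ such that $|a_d(t_0)|\geq |a_d(0)|/2$ for all $|t_0|<\delta_2$. Setting $c_0:=|a_d(0)|/2$, which depends only on $p=f(0,\cdot)$, the preceding paragraph yields a vertical disk of radius $\geq c_0 r^d$ centered at $P(x)$ inside $P(\Delta(x,r))$, as required.

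I expect no serious obstacle in this argument: the Vieta's formula trick delivers the exponent $d$ for free (it is tight, since at a critical point of order $d-1$ the map really does contract a disk of radius $r$ to one of radius $\sim r^d$), and uniformity in $t_0$ is immediate from the continuity of $a_d$ together with the non-vanishing at $t=0$.
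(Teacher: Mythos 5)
Your proof is correct, and it takes a genuinely different (and more elementary) route than the paper's. The paper normalizes $f_{t_0}$ on the disk by $M_{t,z,r}=\sup|f_{t_0}'|$, runs a compactness argument in the finite-dimensional space of polynomials of degree $\leq d$ on $\overline{D}(0,1)$ to extract a uniform $\delta$ with $f_{t,z,r}(D(0,1))\supset D(0,\delta)$ via the open mapping theorem, and then separately bounds $M_{t,z,r}\gtrsim r^{d-1}$ from below by factoring $f_{t_0}'$ through its $d-1$ zeros and finding a point of the disk far from all of them; multiplying the two estimates produces the exponent $d$. Your Vieta argument --- the product of the roots of $h(w)-\zeta$ has modulus $|\zeta|/\bigl(|a_d(t_0)|r^d\bigr)$, so some root lies in the unit disk whenever $|\zeta|<|a_d(t_0)|r^d$ --- delivers the containment $q(D(z_0,r))\supset D\bigl(q(z_0),|a_d(t_0)|r^d\bigr)$ in one stroke, with an explicit (and essentially sharp) constant and no appeal to compactness, normal families, or the open mapping theorem. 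The uniformity in $t_0$ is handled the same way in both proofs, via continuity and non-vanishing of $a_d$ near $0$, and your $c_0=|a_d(0)|/2$ indeed depends only on $p$ as the statement requires. What the paper's heavier machinery buys is reusability: the same normalization and the Koebe-type reasoning reappear in Lemma 5.2, whereas your argument is specific to the crude $r^d$ bound --- but for Lemma 5.1 itself your proof is complete and cleaner.
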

	\begin{proof}
		For fixed $ x=(t,z)$ satisfying $|t|<\delta_2 , z\in \mathbb{C}$, and for fixed $r>0 $, define a function 
		\begin{equation*}
		f_{t,z,r}(w)=\frac{1}{rM_{t,z.r}}\left( f_t(z-rw)-f_t(z)\right)
		\end{equation*}
		which is a polynomial defined on the closed unit disk $ \overline D(0,1) $. The positive number $ M_{t,z,r} $ is defined by
		\begin{equation*}
		M_{t,z,r}=\sup_{w\in \pi_2(\overline\Delta(x,r))}|f'_t(w)|.
		\end{equation*}
		
		\par Let $ A $ be the  finite dimensional normed space containing all polynomials with degree $ \leq d $ on $\overline D(0,1) $, equipped with the uniform norm. Since $ |f'_{t,z,r}(w)|\leq 1 $ on $\overline D(0,1) $, the family $\left\lbrace f'_{t,z,r}\right\rbrace$ is bounded in $ A $. Notice that $ f_{t,z,r}(0)=0 $, so that $\left\lbrace f_{t,z,r}\right\rbrace$ is also bounded in $ A $. The closure of $\left\lbrace f_{t,z,r}\right\rbrace$ contains no constant map since the derivative of constant map vanishes. but $ \sup_{\overline D(0,1)}|f'_{t,z,r}(w)|=1 $.
		\par Now suppose that there is a sequence $ \left\lbrace f_{t_n,z_n,r_n} \right\rbrace $ such that $ f_{t_n,z_n,r_n}(D(0,1)) $ does not contains $ D(0,\delta_n) $, with $ \delta_n\to 0 $. We can take a sub-sequence $ f_{t_n,z_n,r_n}\rightarrow g $ , where $ g $ is a non-constant polynomial map with $ g(0)=0 $. Therefore by open mapping Theorem $ g(D(0,\frac{1}{2})) $ contains $ D(0,\delta) $ for some $ \delta>0 $. Then for $ n $ large enough $ f_{t_n,z_n,r_n}(D(0,1)) $ also contains $ D(0,\delta) $, which is a contradiction.  Therefore for all parameter $ \left\lbrace t,z,r\right\rbrace $, $ f_{t,z,r}(D(0,1)) $ contain a ball $ D(0,\delta) $, which is equivalent to say that
		\begin{equation}
		\Delta(P(x), \delta r M_{t,z,r})\subset P(\Delta(x,r)).
		\end{equation}
		\par Next we estimate $ M_{t,z,r} $ from below. Let $ z_1(t), z_2(t),..., z_{d-1}(t) $ be all zeroes of $ f'_t(z) $. Then $ f'_t(z)=da_d(t) (z-z_1(t))\cdots (z-z_{d-1}(t)) $. We choose $ \delta_2 $ small such that $ c_0=\inf_{|t|\leq \delta_2} |da_d(t)|>0 $.  Then we have 
		\begin{align*}
		M_{t,z,r}=\sup_{w\in \pi_2(\overline\Delta(x,r))}|f'_t(w)|&=\sup_{w\in \pi_2(\overline\Delta(x,r))}|da_d(t) (z-z_1(t))\cdots (z-z_{d-1}(t))|\\&\geq  c_0r^{d-1} \sin^{d-1}\frac{\pi_2}{d-1},
		\end{align*}
		this with (5.1) finishes the proof.
	\end{proof}	

	\begin{lemma}
			There exist $ 0<c<c_0, \;\delta_2>0 $ such that if a vertical disk $ \Delta(x,r)\subset \left\lbrace t=t_0 \right\rbrace  $ satisfies $ \Delta(x,r)\subset\left\lbrace |z|<R\right\rbrace $, $ |t_0|<\delta_2 $ and $ \eta=d(\Delta(x,r),\left\lbrace t=t_0\right\rbrace\cap \text{Crit}(P))>0$, then $ P(\Delta(z,r)) $ contains a disk $ \Delta(P(x),r')\subset \left\lbrace t=\lambda t_0 \right\rbrace  $ of radius $ \geq c\eta^{2d-2}r $.
	
	\end{lemma}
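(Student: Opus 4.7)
The plan is to upgrade Lemma 5.1 by showing that, away from critical points, $f_{t_0}$ is actually univalent on a subdisk of definite size, so that the Koebe $1/4$-theorem replaces the normal-family argument of Lemma 5.1. The exponent $2d-2=(d-1)+(d-1)$ should arise because the factor $\eta^{d-1}$ enters twice: once as a pointwise lower bound on $|f'_{t_0}|$, once as the radius of the univalence subdisk.

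First I would record uniform two-sided control on $f'_{t_0}$ over $\overline{D(z,r)}=\overline{\pi_2(\Delta(x,r))}$. Factoring $f'_{t_0}(w)=d\,a_d(t_0)\prod_{i=1}^{d-1}(w-z_i(t_0))$, where $z_i(t_0)$ runs over the critical points of $f_{t_0}$, the hypothesis $d(\Delta(x,r),\text{Crit}(P)\cap\{t=t_0\})\geq\eta$ gives $|w-z_i(t_0)|\geq\eta$ for every $w\in\overline{D(z,r)}$ and every $i$, hence
\[
|f'_{t_0}(w)|\geq c_0\,\eta^{d-1}\quad\text{on }\overline{D(z,r)},
\]
with the same $c_0=\inf_{|t|\leq\delta_2}|d\,a_d(t)|$ as in Lemma 5.1. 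Since $\overline{D(z,r)}\subset\overline{D(0,R)}$ and $|t_0|<\delta_2$, there is also a uniform upper bound $|f'_{t_0}|\leq M$ on $\overline{D(z,r)}$ with $M$ depending only on $R$ and $\delta_2$; in particular $M\geq c_0\eta^{d-1}$.

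Next I would produce a subdisk $D(z,r')\subset D(z,r/2)$ on which $f_{t_0}$ is injective. A Cauchy estimate on $f'_{t_0}$ yields $|f''_{t_0}|\leq 2M/r$ on $D(z,r/2)$, hence $|f'_{t_0}(w)-f'_{t_0}(z)|\leq (2M/r)|w-z|$ there. Setting $r':=c_0\eta^{d-1}r/(4M)$ gives $r'\leq r/2$ (using $M\geq c_0\eta^{d-1}$) and forces the oscillation of $f'_{t_0}$ on $D(z,r')$ to be at most $|f'_{t_0}(z)|/2$; the identity
\[
f_{t_0}(w_1)-f_{t_0}(w_2)=(w_1-w_2)\int_0^1 f'_{t_0}\bigl(w_2+s(w_1-w_2)\bigr)\,ds
\]
then shows the integral has modulus $\geq|f'_{t_0}(z)|/2>0$, so $f_{t_0}$ is injective on $D(z,r')$. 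Applying the Koebe $1/4$-theorem to the normalized univalent map $\zeta\mapsto(f_{t_0}(z+r'\zeta)-f_{t_0}(z))/(r'f'_{t_0}(z))$ gives
\[
f_{t_0}\bigl(D(z,r')\bigr)\supset D\!\Bigl(f_{t_0}(z),\tfrac{1}{4}r'|f'_{t_0}(z)|\Bigr)\supset D\!\Bigl(f_{t_0}(z),\tfrac{c_0^{2}}{16M}\,\eta^{2d-2}\,r\Bigr),
\]
and setting $c:=c_0^{2}/(16M)$ (shrunk if necessary to enforce $c<c_0$) yields $P(\Delta(x,r))\supset\Delta(P(x),c\,\eta^{2d-2}r)$.

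The only genuinely delicate point is ensuring that the upper bound $M$ does not degenerate as $\eta\to 0$, which is what the hypotheses $\Delta(x,r)\subset\{|z|<R\}$ and $|t_0|<\delta_2$ guarantee. Beyond that, the argument is a quantitative Koebe computation: the gain over Lemma 5.1 is precisely that passing through univalence converts the $r^d$ surjectivity bound into a linear-in-$r$ bound, at the cost of the $\eta^{2d-2}$ penalty.
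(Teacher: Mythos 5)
Your proof is correct and follows essentially the same route as the paper: establish univalence of $f_{t_0}$ on a disk of controlled radius, apply the Koebe one-quarter theorem, and bound $|f'_{t_0}|$ from below by $c_0\eta^{d-1}$ via the factorization of $f'_{t_0}$ through its critical points. The only difference is in the univalence step: the paper uses Rouch\'e's theorem together with a case split on whether $r\leq M_1/(2M_2)$ (with $M_1=\inf|f'_{t_0}|$ depending on $\eta$), whereas you use a Cauchy estimate on $f''_{t_0}$ to control the oscillation of $f'_{t_0}$ on the subdisk of radius $c_0\eta^{d-1}r/(4M)$; your version avoids the case split and the $\eta$-dependent constant, and yields the bound $c\,\eta^{2d-2}r$ in one stroke.
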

	\begin{proof}
		Let $ V=\left\lbrace x_0=(t_0,z_0): \; |t_0|<\delta_2, |z_0|<R, d(x_0,\left\lbrace t=t_0\right\rbrace \cap \text{Crit}(P))>\eta\right\rbrace  $, and set
		\begin{equation*}
		M_1=\inf_V\left|\frac{\partial f}{\partial z} \right|>0,\;M_2=\sup_V \left|\frac{\partial^2 f}{\partial z^2} \right|<\infty,
		\end{equation*}
		here $ M_1 $ depends on $ \eta $ but $ M_2 $ does not.
		\par Thus for $ \Delta(x_0,r)\subset \left\lbrace t=t_0 \right\rbrace  $ satisfying $ \Delta(x_0,r)\subset\left\lbrace |z|<R\right\rbrace $ and $ \eta=d(\Delta(x_0,r),\left\lbrace t=t_0\right\rbrace\cap \text{Crit}(P))>0 $,
	 we have $\Delta=\Delta(x_0,r)\subset V\cap\left\lbrace t=t_0\right\rbrace$. Pick an arbitrary $ a $ in the interior of $ \pi_2(\Delta) $. Then for all $ z\in \partial\pi_2( \Delta) $, we let 
		\begin{equation*}
		h(z)=f_{t_0}(z)-f_{t_0}(a)=f'_{t_0}(z)(z-a)+\frac{1}{2}(z-a)^2g(z).
		\end{equation*}
		We know $ g(z) $ satisfies $ |g(z)|\leq M_2 $, so that
		\begin{align*}
		|f'_{t_0}(z)(z-a)|\geq M_1|z-a|\geq M_1\frac{|z-a|^2}{2r}.
		\end{align*}
		In the case $ r\leq \frac{M_1}{2M_2} $ we have
		\begin{equation*}
		|f'_{t_0}(z)(z-a)|\geq M_2|z-a|^2> \frac{1}{2} |(z-a)^2g(z)|.
		\end{equation*}
		Thus by Rouché's Theorem the function $ h(z) $ has the same number of zero points as $ f'_{t_0}(z)(z-a) $, thus $ h(z) $ has exactly one zero point $ \left\lbrace z=a \right\rbrace $ . Since $ a\in \pi_2(\Delta) $ is arbitrary we have $ f_{t_0} $ is injective on $ \Delta $. The classical Koebe's one-quarter Theorem shows that $ P(\Delta(x_0,r)) $ contains a disk with radius at least
		\begin{equation}
		\frac{1}{4}\left|\frac{\partial f_{t_0}}{\partial z} (z_0)\right|r.
		\end{equation}
		Now we estimate $ \left|\frac{\partial f_{t_0}}{\partial z} (z_0)\right| $ from below. Let $ z_1(t), z_2(t),..., z_{d-1}(t) $ be all zeroes of $ f'_t(z) $. Then $ f'_t(z)=da_d(t) (z-z_1(t))\cdots (z-z_{d-1}(t)) $. We choose $ \delta_2 $ such that $ c_0=\inf_{|t|\leq \delta_2} |da_d(t)|>0 $. We have for every $ 1\leq i\leq d-1 $, $ |z_0-a_i(t_0)|\geq \eta $. Thus we have
		\begin{equation*}
\left|\frac{\partial f_{t_0}}{\partial z} (z_0)\right|=|da_d(t_0) (z_0-z_1(t_0))\cdots (z_0-z_{d-1}(t_0))|\geq c_0\eta^{d-1},
		\end{equation*}
	this with (5.2) gives
	\begin{equation*}
	r'\geq \frac{1}{4}c_0\eta^{d-1}r.
	\end{equation*}
	In the case $ r\geq \frac{M_1}{2M_2} $, by the same argument we have 
		\begin{equation}
	r'\geq \frac{1}{4}c_0\eta^{d-1}\frac{M_1}{2M_2}\geq  \frac{1}{8M_2}c_0^2\eta^{2d-2}.
	\end{equation}
	Setting $ c=\frac{1}{2}\min\left\lbrace \frac{c_0}{4R^{d-1}}, \frac{1}{8RM_2}c_0^2\right\rbrace  $ we get the conclusion.
	\begin{remark}
We note that $ c$ is invariant under a local coordinate change of the form $ t\mapsto \phi(t) $ with $ \phi(0)=0 $ and $ \phi'(0)=1 $. To see this, we know $ c_0 $ and $ R $ are invariant under a local coordinate change of the form $ t\mapsto \phi(t) $ with $ \phi(0)=0 $ and $ \phi'(0)=1 $, and by shrinking $ \delta_2(\phi) $ we have
\begin{equation*}
\sup_V\left|\frac{\partial^2f(\phi(t),z)}{\partial z^2}\right|\leq 2M_2,
\end{equation*}thus from (5.3) and $ c=\frac{1}{2}\min\left\lbrace \frac{c_0}{4R^{d-1}}, \frac{1}{8RM_2}c_0^2\right\rbrace  $ we get that $ c $ is invariant.
	\end{remark}
		\medskip
	\end{proof}

	\par Now we show that the size of forward images of a wandering vertical Fatou disk shrinks slowly.  We begin with a definition.
	\begin{definition}
	Define the \textbf{inradius}  $ \rho $ as follows: for a domain $ U \subset \mathbb{C} $, for every $ z\in U \subset \mathbb{C}$, define
	\begin{equation*}
	\rho(z,U)=\sup\left\lbrace r>0 |\;D(z,r)\subset U\right\rbrace ,
	\end{equation*}
	here $ D(z,r) $ is a disk centered at $ z $ with radius $ r $.
	\end{definition}
	
	\begin{proposition}
		Let $ \Delta_0 \subset \left\lbrace t=t_0\right\rbrace  $ be a wandering vertical Fatou disk centered at $ x_0=(t_0,z_0) $. Let $x_n= (t_n,z_n)=P^n(x_0) $. 
		Set $ \Delta_n=P^n(\Delta_0) $  for every $ n \geq 1$ and let $  \rho_n=\rho(z_n,\pi_2(\Delta_n)) $. There is a constant $ \lambda_2(f)$ such that for fixed $ |\lambda|<\lambda_2 $, we have
		\begin{equation*}
		\lim_{n\to \infty}\frac{|\lambda|^n}{\rho_n}=0.
		\end{equation*}
		
	\end{proposition}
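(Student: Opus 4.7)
The plan is to iterate Lemma 5.2 along the orbit $(x_n)=(t_n,z_n)$ of the center of $\Delta_0$, take logarithms, and compare the resulting lower bound on $\log\rho_n$ with $n\log|\lambda|$, with the accumulated error controlled by a DPU-type estimate (in direct analogy with the structure of the proof of Theorem 4.3). Let $\eta_n=d(x_n,\mathrm{Crit}(P)\cap\{t=t_n\})$. By Lemma 4.1 the critical set of $P$ in the fiber $\{t=t_n\}$ lies within $K_1|\lambda|^{n/d_1}|t_0|^{1/d_1}$ of $C(p)$, so for $n$ large $\eta_n$ differs from $d(z_n,C(p))$ by a quantity that decays exponentially in $n$. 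Provided $\rho_n<\eta_n$ (so that the disk does not straddle a critical point of $f_{t_n}$), Lemma 5.2 gives $\rho_{n+1}\geq c\,\eta_n^{2d-2}\rho_n$, and iterating yields
\[
\log\rho_n \;\geq\; \log\rho_0 \;+\; n\log c \;+\; (2d-2)\sum_{i=0}^{n-1}\log\eta_i.
\]

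The next step is to bound $\sum_{i=0}^{n-1}(-\log\eta_i)$ linearly in $n$ with a controllable slope. For this I would adapt the DPU Lemma (Lemma 2.5) to the skew-product orbit $(z_i)$: the proof in \cite{denker1996transfer} rests only on uniform expansion of $p$ away from $C(p)$ and bounded distortion on small disks, and both properties hold uniformly for the family $\{f_{t_n}\}$ since $f_t\to p$ locally uniformly as $t\to 0$. After replacing $-\log\eta_i$ by $k(z_i)=\max_{c\in C(p)}(-\log d(z_i,c))$ modulo a harmless error, the output is
\[
\sum_{i=0}^{n-1}k(z_i) \;\leq\; Qn \;+\; \sum_{\alpha=1}^{q'}k(z_{i_\alpha}),
\]
with at most $q'\leq q$ exceptional indices. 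Since the forward orbit of a wandering vertical Fatou disk clusters only on $J(p)$ (Remark 3.7) and cannot meet any bulging component of the horizontal size guaranteed by Theorem 4.3, each exceptional term $k(z_{i_\alpha})$ is constrained by an inequality of the form $k(z_{i_\alpha})\leq A i_\alpha + O(1)$ for a small constant $A$, so the exceptional contribution is absorbable into a linear-in-$n$ correction.

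Combining these bounds yields $\log\rho_n\geq \log\rho_0 + n\bigl(\log c - (2d-2)(Q+Aq)\bigr) + O(1)$, so choosing $\lambda_2(f)$ so small that
\[
\log\frac{1}{|\lambda_2|} \;>\; (2d-2)(Q+Aq) \;-\; \log c
\]
gives $\log\rho_n - n\log|\lambda|\to +\infty$, which is exactly $|\lambda|^n/\rho_n\to 0$. The constants involved depend only on $f$, through $p$, $c$, $K_1$, $Q$, $q$, $d$, $d_1$, and the constant $l$ of Theorem 4.3, matching the form of the proposition. The main obstacle I expect is the adaptation of the DPU Lemma to the skew-product orbit, where one must verify uniform expansion and distortion estimates for the family $\{f_t\}$ with explicit constants; a secondary subtlety is maintaining the inductive hypothesis $\rho_n<\eta_n$ along the way, which likely needs a bootstrapping argument using the very bound being proved (alternatively, one could split into cases according to whether $\rho_n\geq \eta_n$ occurs and use Lemma 5.1 in the bad cases, arguing that such cases are rare enough not to spoil the estimate).
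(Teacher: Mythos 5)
Your overall strategy (iterate Lemma 5.2 along the orbit and control the accumulated loss by a DPU-type estimate) is in the same spirit as the paper, but the two steps you wave at are exactly where the real work lies, and one of them is handled incorrectly. The critical gap is your treatment of the exceptional indices. You claim each exceptional term satisfies $k(z_{i_\alpha})\leq A i_\alpha+O(1)$ for a small $A$, so that it can be absorbed into a linear-in-$n$ correction. There is no such bound: the orbit of a wandering disk clusters on $J(p)$ and can come arbitrarily (indeed super-exponentially) close to a critical point of $p$ in $J(p)$, and the disk $\Delta_i$ can even straddle a component of $\mathrm{Crit}(P)$, in which case $\eta_i=0$ and Lemma 5.2 gives nothing at all. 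The correct device --- which is what the paper does --- is to use Lemma 5.1 at those indices, and the cost of Lemma 5.1 is \emph{multiplicative on the log scale}: $\rho_{n+1}\geq c\,\rho_n^{d}$, i.e.\ $\log\rho_{n+1}\geq d\log\rho_n+\log c$. Each bad index raises $\rho$ to the $d$-th power, so the accounting of \emph{how many} bad indices occur is the heart of the proof, not a footnote. The paper's solution is to partition time into exponentially growing blocks $[N^k,N^{k+1})$ with $N>d^q+1$, apply the one-dimensional DPU Lemma once per block (accepting at most $q$ power-degradations per block), and check that the resulting exponent $d^{q(k+1)}$ is still beaten by $N^k$, which is precisely where the hypothesis $N>d^q$ enters and where the smallness condition $\lambda_2<c_1^N$ comes from. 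Your linear-in-$n$ bookkeeping cannot produce this; with a linear number of power-degradations the estimate collapses.

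The second, related gap is your proposed ``skew-product DPU lemma.'' The paper does not prove a nonautonomous DPU estimate by re-running the Denker--Przytycki--Urbanski argument for the family $\{f_t\}$; instead it reduces to the genuine one-dimensional DPU Lemma applied to the $p$-orbit of $z_{N^k-1}$, using a shadowing estimate (Lemma 5.6: $|w_{n+m}-p^m(z_n)|\leq M^m\lambda_3^{n+1}$) to compare $k_i$ with $k(p^{i-N^k+1}(z_{N^k-1}))$. Since the shadowing error grows like $M^m$, this comparison is only valid for $m$ comparable to the block length, which is a second, independent reason the exponential block structure is indispensable and cannot be replaced by a single pass over $[0,n]$. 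Your final remark about bootstrapping $\rho_n<\eta_n$ correctly identifies a real issue, but the resolution is again the case split between Lemma 5.1 and Lemma 5.2 organized by blocks, not a perturbation of the linear estimate.
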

	\begin{proof}
	 Let $ \lambda_3$ be a positive constant to be determined. It is sufficient to prove the result by replacing $ \Delta_n $ by $ \Delta_n\cap\Delta(x_n,\lambda_3^{n+1}) $. In the following we let $ \Delta_n $ always be contained in  $ \Delta(x_n,\lambda_3^{n+1}) $.
		
		\par Without loss generality we can assume that $ |t_0|<\min\left\lbrace \delta_1,\delta_2, \lambda_3\right\rbrace $, where $ \delta_1 $ is the constant in Lemma 4.1 and  $ \delta_2 $ is the constant in Lemma 5.1 and Lemma 5.2 . Let $ N $ be a fixed integer such that $ N>d^q+1 $, where $ q $ is the number of critical points lying in $ J(p) $. Let $ K=\left\lbrace |t|<\min\left\lbrace \delta_1,\delta_2, \lambda_3\right\rbrace  \right\rbrace \times \left\lbrace |z|<R \right\rbrace $ be a relatively compact subset of $ \mathbb{C}^2 $ such that for $ (t,z)\notin K $, $|f(t,z)|\geq 2|z| $. Since the orbits of points in $ \Delta_0 $ cluster only on $ J(p) $,we have $ \Delta_n\in K $  for every $ n $ . We need the following lemma:
		\begin{lemma}
		 There is a constant $ M>0 $ such that if $ |\lambda|<\lambda_3 $, for every $ n $ and for every $ x'=(t_n,w_n)\in \Delta_n $, for every integer $ m $, letting $ (t_{n+m},w_{n+m})=P^m(x') $ we have,
		\begin{equation}
		|w_{n+m}-p^{m}(z_n)|\leq M^m \lambda_3^{n+1}.
		\end{equation}
	\end{lemma}
\begin{proof}
		We prove it by induction. Let $ M $ satisfying for $ (t,z)\in K $, $ \left| \frac{\partial f(t,z)}{\partial t}\right| \leq \frac{M}{2} $ and $ \left| \frac{\partial f(t,z)}{\partial z}\right| \leq \frac{M}{2}$. We can also assume $ M $ is larger than the constant $ K $ in Lemma 4.1. Thus   For $ m=0 $ it is obviously true. Assume that when $ m=k-1 $ is true, we have
		\begin{align*}
		|w_{n+k}-p^k(z_n)|&=|f(t_{n+k-1},w_{n+k-1})-f(0,p^{k-1}(z_n))|\\&
		\leq \frac{M}{2}|t_{n+k-1}|+\frac{M}{2}|w_{n+k-1}-p^{k-1}(z_n)|\\& \leq \frac{M}{2} |\lambda|^{n+1}+\frac{M^k}{2} \lambda_3^{n+1}\\&\leq M^k \lambda_3^{n+1}.
		\end{align*}
		Thus for every $ m $, (5.4) holds.
		\end{proof}
	 Remark that when $ w_n=z_n $, the same argument gives
	\begin{equation*}
	|z_{n+m}-p^{m}(z_n)|\leq M^m |\lambda|^{n+1}.
	\end{equation*}
		\par Let  $ C(P) $ be the union of components of $ \text{Crit}(P) $ such that meet $ C(p)=\text{Crit}(p)\cap J(p) $ in the invariant fiber. For every point $ x\in \Delta_n $, we define $ k(x)=-\log d(x,C(P)\cap \left\lbrace t=t_n\right\rbrace ) $, and $ k_n=\sup_{x\in \Delta_n} k(x) $. (This definition allows $ k_n=+\infty $.) Recall that $ N $ is a fixed integer such that $ N>d^q+1 $. We are going to prove a two dimensional DPU Lemma for attracting polynomial skew products: 
		\begin{lemma}[\bf Two Dimensional DPU Lemma] \label{Two Dimensional DPU Lemma}
		Let $ |\lambda|<\lambda_3 $, then for every $ N^k\leq n<N^{k+1} $, there is a subset \[  \left\lbrace \alpha_1,\cdots,\alpha_{q'}\right\rbrace\subset \left\lbrace N^k-1, N^k,\cdots, n-1\right\rbrace  \] and a  constant $ Q >0$ such that
		\begin{equation}
		\sum_{i=N^k-1}^{n-1} k_i-\sum_{i=1}^{q'} k_{\alpha_i}\leq Q(n-N^k+1),
		\end{equation}
		here  $ k $ is an arbitrary integer, $ q' \leq q$ is an integer. Recall that $ q $ is the number of critical points lying in $ J(p) $. 
		\end{lemma}
	\begin{proof}
		 Recall that the DPU Lemma implies that there is a subset \[  \left\lbrace \alpha_1,\cdots,\alpha_{q'}\right\rbrace\subset \left\lbrace N^k-1, N^k,\cdots, n-1\right\rbrace  \] and a  constant $ Q >0$ such that
		\begin{equation}
		\sum_{i=N^k-1}^{n-1} k(p^{i-N^k+1}(z_{N^k-1})) -\sum_{j=1}^{q'} k(p^{\alpha_j-N^k+1}(z_{N^k-1}))\leq \frac{Q}{2}(n-N^k+1).
		\end{equation}
	
		\par So it is sufficient to prove $ k_i\leq 2k(p^{i-N^k+1}(z_{N^k-1}))$ for every $ i $ not appearing in $ \left\lbrace \alpha_1,\ldots,\alpha_{q'}\right\rbrace$. This is equivalent to 
		\begin{equation}
		d(\Delta_i,C(P)\cap \left\lbrace t=t_i\right\rbrace)\geq  d(p^{i-N^k+1}(z_{N^k-1}),C(p))^2.	\end{equation}
		To prove (5.7), assume that $ d(p^{i-N^k+1}(z_{N^k-1}),C(p))=d(p^{i-N^k+1}(z_{N^k-1}),c_k) $ for some point $ c_k\in C(p) $, let $ C_k $ be the component of $ C(P) $ which meats $ c_k $ at invariant fiber, by (5.4) and Lemma 4.1  we have
		\begin{align*}
		d(\Delta_i,C(P)\cap \left\lbrace t=t_i\right\rbrace)&\geq d(p^{i-N^k+1}(z_{N^k-1}),C(p))-\sup_{x'\in \Delta_i}|\pi_2(x')-p^{i-N^k+1}(z_{N^k-1})|-|w_i-c_k| \\&\geq d(p^{i-N^k+1}(z_{N^k-1}),C(p))-M^{i-N^k+1} \lambda_3^{N^k}-M |\lambda|^{\frac{N^k}{d_1}},
		\end{align*}
		where $ w_i $ is $ \pi_2(C_k\cap\left\lbrace t=t_i\right\rbrace ) $.
		\par By $|\lambda|<\lambda_3 $ we have
		\begin{equation*}
		M^{i-N^k+1} \lambda_3^{N^k}+M |\lambda|^{\frac{N^k}{d_1}}= \left( M^{i-N^k+1}+M\right) \lambda_3^{\frac{N^k}{d_1}}.
		\end{equation*}
		Thus we have
			\begin{equation*}
		d(\Delta_i,C(P)\cap \left\lbrace t=t_i\right\rbrace)\geq d(p^{i-N^k+1}(z_{N^k-1}),C(p))-\left( M^{i-N^k+1}+M\right) \lambda_3^{\frac{N^k}{d_1}}.
		\end{equation*}
		To prove (5.7) it is sufficient to prove
		\begin{equation}
		\left( M^{i-N^k+1}+M\right) \lambda_3^{\frac{N^k}{d_1}}\leq d(p^{i-N^k+1}(z_{N^k-1}),C(p))-d(p^{i-N^k+1}(z_{N^k-1}),C(p))^2.
		\end{equation}
		By (5.6) we have
		\begin{equation*}
		d(p^{i-N^k+1}z_{N^k-1},C(p))\geq e^{-\frac{Q}{2}(n-N^k+1)},
		\end{equation*}
		thus it is sufficient to prove
		\begin{equation}
		(M^{n-N^k+1}+M)\lambda_3^{\frac{N^k}{d_1}}\leq e^{-\frac{Q}{2}(n-N^k+1)}-e^{-Q(n-N^k+1)}.
		\end{equation}
		We can always choose $ \lambda_3 $ sufficiently small to make (5.9) holds for all  $k\geq 0$. This ends the proof of the two dimensional DPU Lemma (5.5).
		\end{proof}
		\par By Lemma 5.1 and Lemma 5.2 there is a constant $ c>0 $ such that
		\begin{equation}
		\rho_{n+1}\geq ce^{-(2d-2)k_n}\rho_n.
		\end{equation}
		and 
		\begin{equation}
		\rho_{n+1}\geq c\rho_n^d.
		\end{equation}
		From the above we can now give some estimates of $ \rho_{n} $. Recall that $\rho_n$ is assumed smaller than $ |\lambda_3|^{n+1} $ otherwise we replace it by $ \min\left\lbrace \rho_n,\lambda_3^{n+1}\right\rbrace  $.
		\begin{lemma}
		There is a constant $ c_1>0 $ such that for $ N^k\leq n<N^{k+1} $, we have
		\begin{equation*}
		\rho_{n}\geq c_1^{N^k}\rho_{N^k-1}^{d^q}.
		\end{equation*}
		\end{lemma}
		\begin{proof}
 For $ N^k\leq i\leq n $,  if $ i-1\in \left\lbrace \alpha_1,\cdots, \alpha_q\right\rbrace $ we apply inequality (5.10), if $ i\notin \left\lbrace \alpha_1,\cdots, \alpha_{q'}\right\rbrace $ we apply inequality (5.11). Thus we have
		\begin{align*}
		\rho_n&\geq c^{n-\alpha_{q'}+1}\exp\left( -(2d-2)\sum_{j=\alpha_{q'}+1}^{n}k_j\right) \left( \cdots \left( c^{\alpha_1-N^k+1}\exp\left( -(2d-2)\sum_{j=N^k-1}^{\alpha_1-1}k_j\right) \rho_{N^k-1}\right) ^d\cdots\right) ^d\\&\geq c^{(n-N^k+1)d^q}\exp\left( -d^q \sum_{j=N^k-1}^{n-1} k_i+d^q\sum_{j=1}^{q'} k_{\alpha_j} \right) \rho_{N^k-1}^{d^q}\quad (\text{because}\; q'\leq q)\\&\geq c^{(n-N^k+1)d^q}\exp\left( -Qd^q\left( n-N^k+1\right) \right) \rho_{N^k-1}^{d^q}\qquad (\text{by Lemma 5.7})\\&\geq c^{N^{k+1}d^q}\exp\left( -Qd^qN^{k+1}\right) \rho_{N^k-1}^{d^q}.
		\end{align*}
		Setting $ c_1=\min\left\lbrace c^{Nd^q}e^{-QNd^q}, \lambda_3 \right\rbrace $ we get the desired conclusion.
		\end{proof}
	\begin{lemma}
		For $ N^k\leq n<N^{k+1} $, $ \rho_0\leq \lambda_3 $ we have
		\begin{equation*}
	\rho_n\geq	c_1^{N^{k+1}}\rho_0^{d^{q(k+1)}}.
		\end{equation*}
		\begin{proof}
		By iterating Lemma 5.8 we get that
		\begin{equation*}
		\rho_{N^k-1}\geq c_1^{\frac{N^k-d^{qk}}{N-d^q}}\rho_0^{d^{qk}}\geq c_1^{N^k}\rho_0^{d^{qk}},
		\end{equation*}
			so that  
		\begin{equation*}
		\rho_n\geq c_1^{N^k}\rho_{N^k-1}^{d^q}\geq c_1^{N^{k+1}}\rho_0^{d^{q(k+1)}},
		\end{equation*}
		this finishes the proof.
		\end{proof}
	\end{lemma}

		Now we can conclude the proof of Proposition 5.5. For $ N^k\leq n<N^{k+1} $ we get
		\begin{equation*}
		\frac{|\lambda|^n}{\rho_n}\leq \frac{|\lambda|^{N^k}}{c_1^{N^{k+1}}\rho_0^{d^{q(k+1)}}}.
		\end{equation*}
	Choosing $ \lambda_2 $ small such that 
	\begin{equation}
	\lambda_2<c_1^N,
	\end{equation}  since $ N>d^q+1 $ we deduce that for every $ |\lambda|<\lambda_0 $,
		\begin{equation*}
		\lim_{k\to \infty}\frac{|\lambda|^{N^k}}{c_1^{N^{k+1}}\rho_0^{d^{q(k+1)}}}=0,
		\end{equation*}
	finally $\lim_{n\to \infty}\frac{|\lambda|^n}{\rho_n}=0$, which finishes the proof.
		
	\end{proof}
\begin{remark}
	The constant $ \lambda_2 $ appearing in Proposition 5.5 is invariant under a local coordinate change of the form  $ t\mapsto \phi(t) $ with $\phi(0)=0$ and $ \phi'(0)=1 $. To see this we know that by (5.9) $ \lambda_3 $ depends only on $ M $ and $ p $, $M$ can be dealt with by replacing it everywhere by $2M$ (see Remark 4.2), so that $ \lambda_3 $ is invariant. By putting $ c_1=\min\left\lbrace c^{Nd^q}e^{-QNd^q}, \lambda_3 \right\rbrace $ we get that $ c_1 $ is invariant. Then by (5.12) we get that $ \lambda_2 $ is invariant.
\end{remark}	
	\medskip
	\begin{corollary}
	In the same setting as Proposition 5.5, for every $ l>0 $, if $\lambda$ is chosen sufficiently small, we have
	\begin{equation*}
	\lim_{n\to \infty}\frac{|\lambda|^n}{\rho_n^l}=0.
	\end{equation*}
	\end{corollary}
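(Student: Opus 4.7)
The plan is to reuse the two intermediate estimates established inside the proof of Proposition 5.5 and simply redo the final comparison with an extra factor of $l$ in the exponent. Concretely, Lemma 5.9 already provides the key inequality
$$\rho_n \geq c_1^{N^{k+1}}\rho_0^{d^{q(k+1)}} \quad \text{for } N^k \leq n < N^{k+1},$$
with constants $c_1$, $N$ and $q$ that depend only on $p$ and on the choice of $\lambda_3$, not on the exponent $l$. Raising to the $l$-th power yields
$$\rho_n^l \geq c_1^{lN^{k+1}}\rho_0^{ld^{q(k+1)}},$$
so that
$$\frac{|\lambda|^n}{\rho_n^l} \leq \frac{|\lambda|^{N^k}}{c_1^{lN^{k+1}}\rho_0^{ld^{q(k+1)}}}.$$

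The remaining task is to check that, after possibly shrinking $|\lambda|$, the right-hand side tends to $0$ as $k \to \infty$. Taking logarithms, this amounts to showing
$$N^k \log|\lambda| \,-\, lN^{k+1}\log c_1 \,-\, ld^{q(k+1)}\log\rho_0 \longrightarrow -\infty.$$
Dividing by $N^k$, the first two terms contribute $\log|\lambda| - lN\log c_1$, which is strictly negative provided $|\lambda| < c_1^{lN}$ (recall $c_1<1$, so $c_1^{lN}<1$); the last term contributes $ld^q(d^q/N)^k\log\rho_0$, which tends to $0$ because the integer $N$ was chosen in the proof of Proposition 5.5 with $N > d^q + 1$. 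Hence choosing $\lambda$ so that $|\lambda| < \min\{\lambda_2,\, c_1^{lN}\}$, the expression above tends to $-\infty$, which yields the corollary.

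There is no real obstacle here: all the genuine dynamical work is already absorbed in Lemma 5.9, and the only ingredient making the argument go through is the strict inequality $N > d^q$, which ensures that the factor $\rho_0^{-ld^{q(k+1)}}$ cannot outpace the super-geometric decay of $|\lambda|^{N^k}$. The threshold $\lambda_0 = \lambda_0(f,l)$ now depends on $l$, but this is permitted by the statement of the corollary, which only asks that $\lambda$ be chosen sufficiently small.
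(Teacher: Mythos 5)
Your proof is correct and follows essentially the same route as the paper: the paper's own (one-line) argument just says to replace the threshold $\lambda_2$ by $\lambda_2^l$, which amounts to rerunning the final estimate of Proposition 5.5 via Lemma 5.9 with the exponent $l$ inserted — exactly the computation you carry out explicitly (and since $\lambda_2<c_1^N$, the paper's condition $|\lambda|<\lambda_2^l$ implies your condition $|\lambda|<c_1^{lN}$). Your version is simply a more detailed write-up of the same idea.
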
	
	\begin{proof}
		By Proposition 5.5 if $ |\lambda|<\lambda_2 $, then $\lim_{n\to \infty}\frac{|\lambda|^n}{\rho_n}=0$ holds.  For any $ l>0 $, we then let $ |\lambda |$ smaller than $\lambda_2^l$ to make the conclusion holds. 	
\end{proof}

	\section{Proof of the non-wandering domain theorem}
	In this section we prove the non-existence of wandering Fatou components. Let us recall the statement
	\begin{theorem}[\bf No wandering Fatou components] \label{th_ No wandering Fatou components}
Let $P$ be a local polynomial skew product  with an attracting invariant fiber,
	\begin{equation*}
	P(t,z)=(\lambda t,f(t,z)).
	\end{equation*}
	Then for any fixed $ f $, there is a constant $ \lambda_0(f) >0$ such that if $ \lambda $ satisfies $ 0<|\lambda|<\lambda_0 $, every forward orbit of a vertical Fatou disk intersects a bulging Fatou component. In particular every Fatou component iterates to a bulging Fatou component, and there are no wandering Fatou components.
	\end{theorem}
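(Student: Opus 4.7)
The plan is by contradiction, assuming a wandering vertical Fatou disk $\Delta_0 \subset \{t = t_0\}$ centered at $x_0 = (t_0, z_0)$ exists. Write $x_n = (t_n, z_n) = P^n(x_0)$, $\Delta_n = P^n(\Delta_0)$, and $\rho_n = \rho(z_n, \pi_2(\Delta_n))$ as in Proposition 5.5. By Remark 3.6 the forward orbit clusters only on $J(p)$, so $z_n \in \{|z| < R\}$ for $n$ large. The strategy combines Theorem 4.3 (horizontal bulging size $r(y) \geq k\, d(y, J(p))^l$) with Corollary 5.11 (slow shrinking of $\rho_n$): fixing $l = l(p)$ and $k$ from Theorem 4.3, I shrink $\lambda_0$ so that, in addition to Theorem 4.3, Corollary 5.11 applied with this same exponent gives $|\lambda|^n / \rho_n^l \to 0$. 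Equivalently, setting $\sigma_n := (|\lambda|^n |t_0| / k)^{1/l}$, we have $\sigma_n / \rho_n \to 0$.

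The pivotal observation: if for some $n$ there is $y \in \pi_2(\Delta_n) \cap F(p) \cap \{|z| < R\}$ with $d(y, J(p)) > \sigma_n$, then Theorem 4.3 produces a horizontal holomorphic disk over $(0, y)$ of radius $r(y) \geq k\, d(y, J(p))^l > |t_n|$ contained in a bulging Fatou component; this disk crosses the fiber $\{t = t_n\}$ at the point $(t_n, y) \in \Delta_n$, so $\Delta_n$ would meet a bulging Fatou component, contradicting the wandering hypothesis. Otherwise, every $y \in D(z_n, \rho_n) \cap F(p)$ satisfies $d(y, J(p)) \leq \sigma_n$, and density of $F(p)$ in $\mathbb{C}$ (since $J(p)$ has empty interior) together with continuity of $d(\cdot, J(p))$ force the full disk $D(z_n, \rho_n) \subseteq \pi_2(\Delta_n)$ into the $\sigma_n$-thickening of $J(p)$.

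The main obstacle is the final geometric step: ruling out that arbitrarily large disks $D(z_n, \rho_n)$ (with $\sigma_n / \rho_n \to 0$) fit inside thin tubular neighborhoods $N_{\sigma_n}(J(p))$ of a polynomial Julia set. This is a structural thinness property of $J(p)$: being the boundary of the compact filled Julia set $K(p)$, with $F(p)$ containing the unbounded basin of infinity, $J(p)$ cannot swallow such disks. Quantitatively one expects, for every $z_0 \in J(p)$ and all sufficiently small $\rho$, the existence of a point in $D(z_0, \rho)$ at distance $\gtrsim \rho^{\alpha}$ from $J(p)$ for some $\alpha = \alpha(p) > 0$; a final tuning of $\lambda_0$ (reapplying Corollary 5.11 with exponent $\alpha l$) then secures the contradiction and completes the argument.

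Once wandering vertical Fatou disks are ruled out, the full theorem follows easily: any Fatou component $\Omega$ of $P$ contains a vertical Fatou disk (a vertical slice through any interior point of $\Omega$), so its forward orbit meets -- and hence entirely enters -- a bulging Fatou component by the established claim, yielding the non-existence of wandering Fatou components.
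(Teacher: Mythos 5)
Your overall framework (argue by contradiction, combine the horizontal-size estimate of Theorem 4.3 with the slow-shrinking estimate of Corollary 5.11 to produce a dichotomy at each time $n$) matches the first half of the paper's proof, and your first alternative is essentially the paper's Step 2. Two problems remain, one minor and one fatal.

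Minor: the horizontal disk through $(0,y)$ is a graph $\{z=\phi(t)\}$ with $\phi(0)=y$, so it meets the fiber $\{t=t_n\}$ at $(t_n,\phi(t_n))$, \emph{not} at $(t_n,y)$. To conclude that this point lies in $\Delta_n$ you must control $|\phi(t_n)-y|$ via the Schwarz-lemma bound (4.1), $|\phi(t_n)-y|\leq 2R|t_n|/r(y)$, and take $y$ well inside $\pi_2(\Delta_n)$ (the paper uses the center $z_n$ and shows the deviation is $<\rho_n/4$). This is fixable and is exactly what the paper does.

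Fatal: your concluding step rests on an unproved ``structural thinness'' of $J(p)$, namely that every disk $D(z_0,\rho)$ centered near $J(p)$ contains a point at distance $\gtrsim \rho^{\alpha}$ from $J(p)$ for some uniform polynomial exponent $\alpha=\alpha(p)$. This is not a known property of general polynomial Julia sets: Julia sets need not be porous and can even have positive Lebesgue measure (Buff--Ch\'eritat), and no uniform polynomial lower bound on the size of Fatou ``holes'' near an arbitrary Julia point is available without hyperbolicity-type hypotheses --- which is precisely what this theorem is meant to avoid. You flag this as the main obstacle and then assert it as an expectation; that is the gap. The paper avoids the issue entirely by a different mechanism: once Step 2 produces a point $y_n\in\Delta(x_n,\rho_n/4)$ with $\pi_2(y_n)\in J(p)$, it runs a shadowing argument (Steps 3--4) built on the two-dimensional DPU Lemma and the inradius recursions of Lemmas 5.6, 5.8 and 5.9, showing that the forward $p$-orbit of every point of $D(\pi_2(y_{N_1}),\rho_{N_1}/4)$ stays inside the uniformly bounded sets $\pi_2(\Delta_{m+N_1})$. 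Hence $\{p^m\}$ is normal on that disk, so $\pi_2(y_{N_1})\in F(p)$, contradicting $\pi_2(y_{N_1})\in J(p)$. Replacing your porosity claim by this dynamical normality argument is the missing idea; your final reduction from vertical Fatou disks to Fatou components is fine.
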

	\begin{proof}
		We argue by contradiction. Suppose $ \Delta_0\subset \left\lbrace t=t_0\right\rbrace  $ is a  vertical disk lying in a Fatou component which does not iterate to a bulging Fatou component. Without loss generality we may assume $ |t_0|<\min\left\lbrace 1,\delta_1,\delta_2, \lambda_3\right\rbrace $. By Remark 2.2, $ \Delta_0 $ is a vertical Fatou disk. Let $ x_0=(t_0,z_0)\in \Delta_0 $ be the center of $ \Delta_0 $ and set $ x_n=(t_n,z_n)=P^n(t_0,z_0)$ and $\Delta_n=P^n(\Delta_0) $. We divide the proof into several steps, We set $ \rho_n=\rho(z_n,\pi_2(\Delta_n)) $ as before and assume that $ \rho_0\leq \lambda_3 $. Notice that $ \Delta_0 $ can not be contained in the basin of infinity, thus $  \Delta_n $ is uniformly bounded. Let $ \lambda_0<\min\left\lbrace \lambda_1, \lambda_2^l \right\rbrace  $, where $ \lambda_1$ and $\lambda_2$ come from Theorem 4.3 and Proposition 5.5. In the course of the proof we will have to shrink $ \lambda_0 $ one more time.
		\medskip
		
		\noindent
		{\bf $\bullet$ Step 1.} By Remark 3.6,  the orbits of points in $ \Delta_0 $ cluster only on $ J(p) $. 
		
		\medskip
		\noindent
		{\bf $\bullet$ Step 2.} We show that there exist $ N_0>0 $ such that when $ n\geq N_0 $, the projection $ \pi_2\left( \Delta(x_{n}, \frac{\rho_{n}}{4})\right)  $ intersects $ J(p) $. We determine $ N_0 $ in the following. Suppose  $ \pi_2\left( \Delta(x_{n}, \frac{\rho_{n}}{4})\right)  $ does not intersect $ J(p) $. Thus $ z_n \in F(p) $ and Theorem 4.3 implies $ r(z_{n})\geq k \;d(z_{n},J(p)) ^l$, then we have
		\begin{align*}
		\frac{|t_{n}|}{r(z_{n})}\leq \frac{|t_{n}|}{k \;d(z_{n},J(p)) ^l}\leq \frac{4^l|t_{n}|}{k \;\rho_{n} ^l}.
		\end{align*}
		By  Corollary 5.11 we can let $ N_0 $ large enough so that for all $ n\geq N_0 $, $ \frac{4^l|t_{n}|}{k \;\rho_{n} ^l}<1 $. From the definition of $ r(z_n) $ we get a horizontal holomorphic disk defined by $ \phi(t) $, $ |t|<r(z_n) $  contained in the bulging Fatou components, with $ \phi(0)=z_{n} $, and $ t_{n} $ is in the domain of $ \phi $.  Then we have
		\begin{align*}
		|\phi(t_{n})-z_{n}|=|\phi(t_{n})-\phi(0)|\leq 2R\frac{|t_{n}|}{r(z_{n})}\leq 2R\frac{|t_{n}|}{k \;d(z_{n},J(p)) ^l}\leq
		2R\frac{4^l|t_{n}|}{k \;\rho_{n} ^l}.
		\end{align*}
		Again by Corollary 5.11,  we can let $ N_0 $ large enough  that for all $ n\geq N_0 $, $ 2R\frac{4^l|t_{n}|}{k \;\rho_{n} ^l}<\frac{\rho_{n}}{4} $. Thus $  \phi(t_{n})\in \Delta(x_{n},\frac{\rho_{n}}{4})\subset \Delta_{n} $. Since $ \phi(t_{n}) $ is contained in the bulging Fatou components that contains $ z_{n} $, this implies $ \Delta_{n} $ intersects the bulging Fatou component so it can not be wandering. This contradiction shows that $ \pi_2\left( \Delta(x_{n}, \frac{\rho_{n}}{4})\right)  $ intersects $ J(p) $. 
		\par Let $ y_{n}\in \Delta_{n} $ satisfies $\pi_2\left(y_{n} \right) \in \pi_2\left( \Delta(x_{n}, \frac{\rho_{n}}{4})\right) \cap J(p) $, then for all $ x\in \Delta(y_{n}, \frac{\rho_{n}}{4}) $ we have $ \rho(\pi_2(x),\pi_2(\Delta_{n}))\geq \frac{\rho_{n}}{2} $. 
		
		\medskip
		\noindent
		{\bf $\bullet$ Step 3.} We show that there is an integer $ N_1>N_0 $ such that for every $  x\in \Delta(y_{N_1}, \frac{\rho_{N_1}}{4}) $, for every $ m\geq 0 $, $ p^m(\pi_2(x))\in \pi_2(\Delta_{m+N_1} ) $, here $\pi_2\left(y_{N_1} \right) \in \pi_2\left( \Delta(x_{N_1}, \frac{\rho_{N_1}}{4})\right) \cap J(p) $. This means that the orbit of $ \pi_2(x) $ is always shadowed by the orbit of $ \Delta_{N_1} $, which will contradict the fact that $ \pi_2(\Delta_{m+N_1} ) $ intersects $ J(p) $.   To show this, we inductively prove the more precise statement that for fixed $ N>d^q+1 $, there exist a  large $ N_1=N^{k_0}-1>N_0 $, such that  for every $k\geq k_0$, $N^k\leq n<N^{k+1} $, we have
		\begin{equation}
		p^{n-N_1}(\pi_2(x))\in \pi_2(\Delta_{n} )
		\end{equation} and 
		\begin{equation}
		\rho'_n\geq c_2^{N^{k+1}}\rho_{0}^{d^{q(k+1)}},
		\end{equation}
		where  $ \rho'_n=\rho(p^{n-N_1}(\pi_2(x)),\pi_2(\Delta_{n}) $, $ c_2=\frac{c_1}{2} $ comes from Lemma 5.8 and Lemma 5.9. We will determine $k_0$ in the following. 
		\par From Lemma 5.9 we know that (6.1) and (6.2) hold for $ n=N_1 $. Assume that for some $k\geq k_0$, for all $ n\leq N^k-1 $, (6.1) and (6.2) holds. Then for $N^k\leq n<N^{k+1} $, let
	 $ y=P^{n-N^k+1}(t_{N^k-1}, p^{N^k-1-N_1}(\pi_2(x)) ) $, by Lemma 5.8 we have
		\begin{equation}
		\rho(y,\Delta_{n})\geq c_1^{N^k}\left( {\rho'}_{N^k-1}\right) ^{d^q}.
		\end{equation}
		To estimate the distance between $ \pi_2(y)$ and $p^{n-N_1}(\pi_2(x)) $, by Lemma 5.6 we have
		\begin{align}
		|\pi_2(y)-p^{n-N_1}(\pi_2(x))|\leq M^{n-N^k+1}|\lambda|^{N^k}.
		\end{align}
		From (6.3) and (6.4) we have
		\begin{align*}
		\rho'_n&\geq c_1^{N^k}\left( {\rho'}_{N^k-1}\right) ^{d^q}-M^{n-N^k+1}|\lambda|^{N^k}\\&\geq c_1^{N^k}(c_2^{N^{k}}\rho_{0}^{d^{qk}})^{d^q}-M^{n-N^k+1}|\lambda|^{N^k}\qquad\text{(By the induction hypothesis (6.2))}\\&
		\geq c_1^{N^k}c_2^{N^kd^q} \rho_{0}^{d^{q(k+1)}}-M^{n-N^k+1}|\lambda|^{N^k}.
		\end{align*}
		By the choice $ c_2=\frac{c_1}{2} $ we have
		
		\begin{equation*}
		\rho'_n\geq 2c_2^{N^{k+1}}\rho_{0}^{d^{q(k+1)}}-M^{n-N^k+1}|\lambda|^{N^k}.
		\end{equation*}
		To get (6.2) it is sufficient to prove
		\begin{equation*}
		c_2^{N^{k+1}}\rho_{0}^{d^{q(K+1)}}\geq M^{n-N^k+1}|\lambda|^{N^k}.
		\end{equation*}
		We take $ \lambda_0 $ sufficiently small such that
		\begin{equation}
		\lambda_0\leq (\frac{c_2}{M} )^{2N}.
		\end{equation}
		 Thus to prove (6.2) it is sufficient to prove that when $ |\lambda|<\lambda_0 $,
		\begin{equation}
\rho_{0}^{d^{q(k+1)}}\geq |\lambda|^{\frac{N^k}{2}}.
		\end{equation}
		Since  $ N>d^q+1$, we can choose $ k_0 $ large enough such that for every $ k>k_0 $ (6.6) holds. This finishes the induction.

		\par This shows that (6.1) and (6.2) are true for all $ n\geq N_1 $.
		
		\medskip
		\noindent
		{\bf $\bullet$ Step 4.} Since for every $  x\in \Delta(y_{N_1}, \frac{\rho_{N_1}}{4}) $, for every $ m\geq 0 $, $ p^m(\pi_2(x))\in \pi_2(\Delta_{m+N_1} ) $, and $  \Delta_n $ is uniformly bounded, the family $ \left\lbrace p^m\right\rbrace_{m\geq 0} $ restricts on $ D(\pi_2(y_{N_1}), \frac{\rho_{N_1}}{4})) $ is a normal family. Thus $ \pi_2(y_{N_1}) $ belongs to the Fatou set $ F(p) $, this contradicts to  $ \pi_2(y_{N_1})\in J(p) $. Thus the proof is complete.
	
\end{proof}

\begin{remark}
The constant $ \lambda_0 $ appearing in Theorem 6.1 is invariant under a local coordinate change of the form $ t\mapsto \phi(t) $ with $\phi(0)=0$ and $ \phi'(0)=1 $. To see this we know that the constants $ c_2=\frac{c_1}{2}$, $ M $ and $N$ are invariant under a local coordinate change of the form $ t\mapsto \phi(t) $ with $\phi(0)=0$ and $ \phi'(0)=1 $ ($M$ can be dealt with by replacing it everywhere by $2M$, see Remark 4.2). Then by (6.5) $ \lambda_0 $ only depends on $ c_2$, $M $ and $N$, thus $ \lambda_0 $ is invariant.
\end{remark}
	\begin{remark}
Lilov's Theorem can be seen as a consequence of Theorem 6.1. In fact, for the super-attracting case, the Fatou components of $ p $ bulge for a similar reason. Since when $ |t| $ is very small, the contraction to the invariant fiber is stronger than any geometric contraction $ t\mapsto \lambda t $, Theorem 4.3 and Proposition 5.5 follows easily. Thus following the argument of Theorem 6.1 gives the result.
	\end{remark}

\medskip
In the following theorem we show how the main theorem can be applied to globally defined polynomial skew products. 

\begin{theorem}
 Let 
\begin{equation*}
P(t,z)=(g(t),f(t,z)):\mathbb{C}^2\to \mathbb{C}^2
\end{equation*}  
be a globally defined polynomial skew product, where $ g,f $ are polynomials. Assume $\text{deg} \;f=d$ and the coefficient of the term $z^d$ of $f$ is non-vanishing, then there exist a constant $ \lambda_0(t_0,f)>0 $ depending only on $f$ and $ t_0 $ such that if $ g(t_0)=t_0 $ and $ |g'(t_0)|<\lambda_0 $ then there are no wandering Fatou components in $ B(t_0)\times \mathbb{C}, $ where $ B(t_0) $ is the attracting basin of $ g $ at $ t_0 $ in the $ t $-coordinate.
\end{theorem}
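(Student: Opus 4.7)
The plan is to reduce Theorem 6.4 to Theorem 6.1 by using Koenigs' linearization theorem on the base map $g$, and then to argue that the invariance property recorded in Remark 6.2 produces a constant $\lambda_0$ that depends only on $(t_0,f)$.

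First, I would translate the fixed point $t_0$ to the origin, so that the map becomes $P(t,z)=(\tilde g(t),\tilde f(t,z))$ with $\tilde g(0)=0$ and $\tilde g'(0)=g'(t_0)=:\lambda$. Assume for the moment that $\lambda\neq 0$. By Koenigs' linearization theorem applied to $\tilde g$, there exists a local biholomorphism $\phi$ on some disk $\Delta$ around $0$ with $\phi(0)=0$ and $\phi'(0)=1$ such that $\phi\circ\tilde g\circ\phi^{-1}(t)=\lambda t$ on $\Delta$. Setting $\Phi(t,z)=(\phi(t),z)$, the conjugate map $\tilde P:=\Phi\circ P\circ\Phi^{-1}$ has the form
\begin{equation*}
\tilde P(t,z)=(\lambda t,F(t,z)),\qquad F(t,z)=\tilde f(\phi^{-1}(t),z),
\end{equation*}
on $\Delta\times\mathbb{C}$; this is exactly a local attracting polynomial skew product in the sense of Section 2. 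Theorem 6.1 applied to $\tilde P$ produces a constant $\lambda_0(F)>0$, and by Remark 6.2 this constant is invariant under coordinate changes $t\mapsto\phi(t)$ with $\phi(0)=0,\phi'(0)=1$. Consequently $\lambda_0(F)$ depends only on the germ of $\tilde f$ at $t=0$, hence only on $f$ and $t_0$; this is what we take as $\lambda_0(t_0,f)$.

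Next I would globalize. Let $U$ be any Fatou component of $P$ contained in $B(t_0)\times\mathbb{C}$ and let $\Delta_0\subset U$ be a vertical disk; by Remark 2.2 it is a vertical Fatou disk. Write $\Delta_0\subset\{t=s_0\}$ with $s_0\in B(t_0)$, so $g^n(s_0)\to t_0$. For every sufficiently large $n$, the $t$-coordinate of $P^n(\Delta_0)$ lies in $\phi(\Delta)$, i.e.\ in the region where the Koenigs conjugacy is defined. Transporting by $\Phi$, the image $\Phi(P^n(\Delta_0))$ is a vertical Fatou disk for $\tilde P$ on $\Delta\times\mathbb{C}$. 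Applying Theorem 6.1 to $\tilde P$ (which is legitimate since $|\lambda|<\lambda_0(t_0,f)=\lambda_0(F)$), the forward orbit of this disk under $\tilde P$ eventually meets a bulging Fatou component of $\tilde P$. Conjugating back by $\Phi^{-1}$ and observing that the bulging Fatou components of $\tilde P$ correspond exactly to the global bulging Fatou components of $P$ (because the $1$-dimensional dynamics on $\{t=t_0\}$ is the same, and the local $2$-dimensional Fatou components constructed in Section 3 are neighborhoods of fixed/periodic objects that are automatically contained in global Fatou components of $P$), we conclude that the forward orbit of $\Delta_0$ intersects a bulging Fatou component of $P$. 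Hence $U$ is not wandering.

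Finally, the super-attracting case $g'(t_0)=0$ is handled by substituting B\"ottcher's theorem for Koenigs and invoking Lilov's theorem in place of Theorem 6.1; here the statement $|g'(t_0)|<\lambda_0(t_0,f)$ is trivially satisfied. I expect the main technical point of the argument to be the identification of bulging Fatou components after conjugation: one needs to verify that the two-dimensional Fatou components produced in Section 3 for $\tilde P$ actually coincide with the restrictions to $\Delta\times\mathbb{C}$ of honest global Fatou components of $P$. This is straightforward for the attracting and Siegel cases (the basins and linearization domains are intrinsic to the fixed point $(t_0,\cdot)$ of $P$), and for the parabolic case it follows from the fact that the attracting petals of Theorem 3.3 consist of points whose $P$-orbit converges to $(t_0,\cdot)$, a property preserved by global iteration.
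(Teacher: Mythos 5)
Your proposal is correct and follows essentially the same route as the paper: translate the fixed point to the origin, linearize the base by Koenigs with the normalization $\phi'(0)=1$, and invoke the invariance of $\lambda_0(f)$ under such coordinate changes (Remark 6.2) to get a constant depending only on $f$ and $t_0$. The only differences are that you spell out the globalization step (orbits of vertical Fatou disks eventually entering the linearization domain, and the matching of local bulging components with global Fatou components) and treat the case $g'(t_0)=0$ via B\"ottcher and Lilov, both of which the paper leaves implicit.
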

\begin{proof}
	First by a coordinate change $\phi_0:t\mapsto t+t_0 $, $ P $ is conjugated to 
	\begin{equation*}
	P_0:(t,z)\mapsto (g_0(t), f_0(t,z)),
	\end{equation*}
	where $ g_0(t)=g(t+t_0)-t_0 $, and $ f_0(t,z)=f(t+t_0,z) $. It is clear that $ \left\lbrace t=0 \right\rbrace  $ becomes an invariant fiber.
	\par By Koenig's Theorem we can introduce a local coordinate change $ \phi: t\mapsto \phi(t) $ with $\phi(0)=0$ and $ \phi'(0)=1 $ such that $ P_0 $ is locally conjugated to
	\begin{equation}
	(t,z)\mapsto (\lambda t, f_0(\phi(t),z)),
	\end{equation}
	where $ \lambda=g'(t_0) $.
	\par We have seen in Remark 6.2 that the constant $ \lambda_0(f) $ is invariant under a local coordinate change of the form $ t\mapsto \phi(t) $ with $\phi(0)=0$ and $ \phi'(0)=1 $. This means that for fixed $f$, for every such $ \phi $,
	\begin{equation*}
	P_\phi: (t,z)\mapsto (\lambda t, f(\phi(t),z))
	\end{equation*} has no wandering Fatou components when $ |\lambda|<
	\lambda_0(f) $. 
	Thus applying this to (6.7) when $|\lambda|= |g'(t_0)|<\lambda_0(f_0) $ we get the local skew product $(t,z)\mapsto (\lambda t, f_0(\phi(t),z))$ has no wandering Fatou components. Thus by conjugation $ P $ has no wandering Fatou components in a neighborhood of $ \left\lbrace t=t_0\right\rbrace $ , thus actually $P$ has no wandering Fatou components in $ B(t_0)\times \mathbb{C}, $ where $ B(t_0) $ is the attracting basin of $ g $ at $ t_0 $ in the $ t $-coordinate.
\end{proof}
\medskip
	\bibliographystyle{plain}
	
	\bibliography{Mybib}

\begin{thebibliography}{10}

\bibitem{astorg2014two}
Matthieu Astorg, Xavier Buff, Romain Dujardin, Han Peters, and Jasmin Raissy.
\newblock A two-dimensional polynomial mapping with a wandering {F}atou
  component.
\newblock {\em Annals of mathematics}, 184(1):263--313, 2016.

\bibitem{denker1996transfer}
Manfred Denker, Feliks Przytycki, and Mariusz Urba{\'n}ski.
\newblock On the transfer operator for rational functions on the {R}iemann
  sphere.
\newblock {\em Ergodic Theory and Dynamical Systems}, 16(02):255--266, 1996.

\bibitem{dujardin2016non}
Romain Dujardin.
\newblock A non-laminar dynamical green current.
\newblock {\em Mathematische Annalen}, 365(1-2):77--91, 2016.

\bibitem{jonsson1999dynamics}
Mattias Jonsson.
\newblock Dynamics of polynomial skew products on $ \mathbb{C}^2 $.
\newblock {\em Mathematische Annalen}, 314(3):403--447, 1999.

\bibitem{lilov2004fatou}
Krastio Lilov.
\newblock Fatou theory in two dimensions.
\newblock {\em {U}niversity of {M}ichigan {P}h{D} {T}hesis}, 2004.

\bibitem{peters2017elliptic}
Han Peters and Jasmin Raissy.
\newblock Fatou components of elliptic polynomial skew products.
\newblock {\em Ergodic Theory and Dynamical Systems}, pages 1--13, 2017.

\bibitem{peters2017fatou}
Han Peters and Iris~Marjan Smit.
\newblock Fatou components of attracting skew-products.
\newblock {\em The Journal of Geometric Analysis}, pages 1--27, 2017.

\bibitem{peters2016polynomial}
Han Peters and Liz~Raquel Vivas.
\newblock Polynomial skew-products with wandering {F}atou-disks.
\newblock {\em Mathematische Zeitschrift}, 283(1-2):349--366, 2016.

\bibitem{raissy2017polynomial}
Jasmin Raissy.
\newblock Polynomial skew-products in dimension 2: Bulging and wandering fatou
  components.
\newblock {\em Bollettino dell'Unione Matematica Italiana}, 10(3):441--450,
  2017.

\bibitem{rosay1988holomorphic}
Jean-Pierre Rosay and Walter Rudin.
\newblock Holomorphic maps from $ \mathbb{C}^n $ to $ \mathbb{C}^n $.
\newblock {\em Transactions of the American Mathematical Society},
  310(1):47--86, 1988.

\bibitem{sullivan1985quasiconformal}
Dennis Sullivan.
\newblock Quasiconformal homeomorphisms and dynamics {I}. {S}olution of the
  {F}atou-{J}ulia problem on wandering domains.
\newblock {\em Annals of mathematics}, 122(2):401--418, 1985.

\bibitem{ueda1986local}
Tetsuo Ueda.
\newblock Local structure of analytic transformations of two complex variables,
  {I}.
\newblock {\em Journal of Mathematics of Kyoto University}, 26(2):233--261,
  1986.

\end{thebibliography}
	
\end{document}